\newcommand{\C}{\mathbb{C}}
\newcommand{\Q}{\mathbb{Q}}
\newcommand{\R}{\mathbb{R}}
\newcommand{\Z}{\mathbb{Z}}
\renewcommand{\O}{\mathcal{O}}
\renewcommand{\P}{\mathbb{P}}
\newcommand{\M}{\overline{\mathcal{M}}}
\renewcommand{\L}{\mathcal{L}}
\newcommand{\vir}{\mathrm{vir}}
\renewcommand{\t}{\mathbf{t}}
\newcommand{\G}{\mathcal{G}}
\newcommand{\J}{\mathcal{J}}
\newtheorem{dummy}{}[subsection]
\newtheorem{theorem}[dummy]{Theorem}
\newtheorem{proposition}[dummy]{Proposition}
\theoremstyle{definition}
\newtheorem{example}[dummy]{Example}
\newtheorem{remark}[dummy]{Remark}
\newtheorem{exercise}[dummy]{Exercise}
\begin{document}

\title{Localization and Mirror Symmetry}
\author[D.~Ross]{Dustin Ross}
\address{Department of Mathematics, San Francisco State University, 1600 Holloway Avenue, San Francisco, CA 94132, USA}
\email{rossd@sfsu.edu}

\begin{abstract}
These notes were born out of a five-hour lecture series for graduate students at the May 2018 Snowbird workshop \emph{Crossing the Walls in Enumerative Geometry}. After a short primer on equivariant cohomology and localization, we provide proofs of the genus-zero mirror theorems for the quintic threefold, first in Fan-Jarvis-Ruan-Witten theory and then in Gromov-Witten theory. We make no claim to originality, except in exposition, where special emphasis is placed on peeling away the standard technical machinery and viewing the mirror theorems as closed-formula manifestations of elementary localization recursions.
\end{abstract}

\maketitle

\tableofcontents

\section{Introduction}

The mirror theorem for the quintic threefold $Q\subset\P^4$, first conjectured in the physics literature by Candelas, de la Ossa, Green, and Parkes \cite{COGP}, reveals deep and surprising structure in the enumerative geometry of rational curves in $Q$. In the two decades since its original proof by Givental \cite{G}, the mirror theorem has undergone countless reinterpretations and generalizations. The proof we present here is an adaptation of ideas that were developed more generally in \cite{CCIT} and \cite{RR}, and they generalize naturally to prove genus-zero mirror theorems for complete intersections in toric Deligne-Mumford stacks. En route to proving the mirror theorem for the Gromov-Witten theory of $Q$, we also take a detour to prove the mirror theorem for the Fan-Jarvis-Ruan-Witten theory of $Q$, a result that was first proved by Chiodo and Ruan \cite{CR}. The combination of these two mirror theorems leads to the proof of Witten's genus-zero ``Landau-Ginzburg/Calabi-Yau correspondence'', which we do not discuss explicitly, but which is lurking in the background.

Torus localization is the primary tool used in the proofs of the genus-zero mirror theorems presented here. Indeed, a direct application of the localization theorem of Atiyah and Bott \cite{AB} leads to an explicit algorithm that computes any genus-zero Gromov-Witten invariant of $Q$. However, as is typical with such localization computations, the combinatorial complexity of the algorithm grows at an unmanageable rate, and it quickly becomes apparent that any hope to pin down a closed formula for the Gromov-Witten invariants must result from the recursive structure of the localization contributions. The main content of the proof, then, is a clever packaging of localization recursions.

While the mirror theorems are beautiful and important results in their own right, the intent of these lectures is not simply to present proofs of these two results for historical enlightenment. Rather, our goal is to help the student familiarize herself with the methods of manipulating localization relations in two different but related settings. Localization is one of only a few computational methods in the Gromov-Witten theory toolkit, and nearly all of the current approaches to understanding higher-genus Gromov-Witten and Fan-Jarvis-Ruan-Witten theory utilize clever applications of localization on certain auxiliary moduli spaces. It is in this light that we view the genus-zero mirror theorems as a useful setting through which the student can begin honing her localization skills today in order to apply them in more creative settings that push the boundaries of the field tomorrow. 

\subsection{Target audience}

These lectures are intended for advanced graduate students who have already taken a few courses in algebraic geometry. Some exposure to moduli spaces of stable curves and stable maps would certainly be useful; we suggest \cite[Chapters 22 - 25]{MS}  for the requisite background on $\M_{g,n}$ and $\M_{g,n}(X,d)$. A working knowledge of orbifold curves and line bundles on them would also be useful; a good starting place is Chapter 1 of \cite{ALR}.

\subsection{Disclaimer}

For the sake of brevity, we have chosen to make many omissions, and we have made no attempt to make this a comprehensive account of mirror symmetry. Two such accounts already exist, and we highly recommend them: \cite{CK} and \cite{MS}. Our hope for these notes is that they will serve as a more compact reference that only emphasizes the aspects of the story that are essential to the proofs.

\subsection{Acknowledgements}

Special thanks to the organizers of the Snowbird workshop \emph{Crossing the Walls in Enumerative Geometry} for the invitation to speak; to the basketball and hot tub crew for providing a good deal of evening entertainment while we were at Snowbird; and to Emily Clader for many conversations related to these lectures and for feedback on a preliminary version of these notes.

\section{Equivariant cohomology and localization}

In this lecture, we introduce equivariant cohomology by way of several instructive examples that will be important throughout the subsequent lectures. We state the localization theorem of Atiyah and Bott in the form that will be most applicable towards the mirror theorems, and we give a few simple applications of the localization theorem.

\subsection{Equivariant Cohomology}

Throughout this section, let $X$ be a smooth, projective variety, and let $T=\{(t_1,\dots,t_n)\}=(\C^*)^n$ be an algebraic torus acting on $X$. In the same way that the cohomology ring $H^*(X)=H^*(X,\Q)$ captures some of the intersection-theoretic geometry of $X$, the \emph{equivariant cohomology ring}, denoted $H_T^*(X)$, captures some of the $T$-equivariant intersection-theoretic geometry of $X$. The formal definitions are not immediately enlightening, so we choose to forego them; essentially all of the properties necessary for the proofs in these lectures can be gleaned from the following examples. For a more thorough treatment, the reader is directed to \cite{AB}.

The first example concerns the case where the geometry (and thus, the action) is trivial.

\begin{example}
Let $X=\mathrm{pt}$ be a point. Then $H_T^*(\mathrm{pt})$ is a graded polynomial ring
\[
H_T^*(\mathrm{pt})=\Q[\alpha_1,\dots,\alpha_n],
\]
where $\deg_\R(\alpha_i)=2$. One way to interpret the classes $\alpha_i$ is via Chern classes of equivariant vector bundles. It is a standard consequence of the formal definitions that
\[
\alpha_i=c_1(\O_{\alpha_i})
\]
where $\O_{\alpha_i}$ is the equivariant line bundle on $X=\mathrm{pt}$ that is geometrically trivial but with $T$-action on fibers given by $v\mapsto t_i^{-1}v$. In fact, any $T$-equivariant vector bundle on a point (i.e. $T$-representation) uniquely decomposes as a direct sum of tensor products of the line bundles $\O_{\alpha_i}$, and its Chern classes can be determined by the usual properties of Chern classes with respect to tensor products and direct sums.
\end{example}

The second important example concerns the case where the geometry is non-trivial, but the action is trivial.

\begin{example}\label{ex:trivialaction}
Let $T$ act trivially on $X$. Then
\[
H_T^*(X)=H^*(X)\otimes\Q[\alpha_1,\dots,\alpha_n].
\]
If $V$ is a(n equivariantly trivial) rank-$r$ vector bundle on $X$, then the Euler class of $V$ is the top Chern class:
\[
e(V):=c_r(V)\in H^{2r}(X)\subset H_T^{2r}(X).
\]
Imposing a $T$-action on $V$, such as $v\mapsto t^{-\alpha}v:= t_1^{-a_1}\cdots t_n^{-a_n}v$, is the same as tensoring by an equivariant line bundle: $V\otimes\O_{\alpha}$, where $\alpha=a_1\alpha_1+\cdots+a_n\alpha_n$. In this case,
\[
e(V\otimes\O_\alpha)=c_r(V)+c_{r-1}(V)\alpha+\cdots+c_1(V)\alpha^{r-1}+\alpha^r\in H_T^{2r}(X).
\]
In particular, the Euler class is invertible in equivariant cohomology, as long as we formally invert the class $\alpha$:
\[
e^{-1}(V\otimes\O_\alpha)=\frac{1}{\alpha^r}\sum_{0\leq k\leq \dim(X)}\left(-\frac{c_1(V)}{\alpha}-\cdots-\frac{c_{r-1}(V)}{\alpha^{r-1}}-\frac{c_r(V)}{\alpha^r}\right)^k.
\]
More generally, if $T$ acts trivially on $X$ and $V$ is an equivariant vector bundle such that
\[
X=\mathrm{Tot}(V)^T,
\]
then $e(V)$ is invertible in $H^*_T(X)$, after formally inverting the some of the equivariant parameters.
\end{example}

The final example concerns the standard torus action on projective space, where both the geometry and the action are non-trivial.

\begin{example}\label{ex:projective}
Let $T=(\C^*)^{n+1}$ act on $\P^n$ by
\[
[x_0,\dots,x_n]\mapsto [t_0x_0,\dots,t_nx_n].
\]
Then
\begin{equation}\label{eq:equivproj}
H_T^*(\P^n)=\frac{\Q[H,\alpha_0,\dots,\alpha_n]}{(H-\alpha_0)\cdots(H-\alpha_n)}.
\end{equation}
In usual cohomology, we interpret $H$ as a first Chern class, $H=c_1(\O_{\P^n}(1))$, and that is essentially the same here. Recall that the total space of $\O_{\P^n}(1)$ can be defined globally by
\begin{equation}\label{eq:equivhyperplane}
\mathrm{Tot}(\O_{\P^n}(1))=\frac{(\C^{n+1}\setminus\{0\})\times\C}{\C^*},
\end{equation}
where $\C^*$ acts on the $n+2$ coordinates in the numerator diagonally; the first $n+1$ coordinates $(x_0,\dots,x_n)$ correspond to the homogeneous coordinates on $\P^n$ and the last coordinate $v$ corresponds to the fiber coordinate. The quotient \eqref{eq:equivhyperplane} can be made equivariant in a canonical way: 
\[
[x_0,\dots,x_n,v]\mapsto[t_0x_0,\dots,t_nx_n,v],
\]
and in the $T$-equivariant geometry of $\P^n$, we will always take $\O_{\P^n}(1)$ to denote \eqref{eq:equivhyperplane} with this $T$-action, and we define
\[
H:=c_1(\O_{\P^n}(1))\in H_T^*(\P^n).
\]

Tensoring $\O_{\P^n}(1)$ by $\O_\alpha$ is the same as letting $T$ act on the last coordinate of \eqref{eq:equivhyperplane} by $v\mapsto t^{-\alpha} v$. From the description \eqref{eq:equivhyperplane}, we see that $v=x_i$ descends to a $T$-equivariant section of $\O_{\P^n}(1)\otimes\O_{-\alpha_i}$. In fact, the set of $T$-equivariant sections of $\O_{\P^n}(1)\otimes\O_{-\alpha_i}$ are the scalar multiples of $v=x_i$. Since top Chern classes are often interpreted as the (Poincar\'e dual of) the vanishing of a generic section, we should interpret $H-\alpha_i$ as the vanishing of $x_i$:
\[
H-\alpha_i=[\{x_i=0\}].
\]
Taken a step further, we interpret
\[
\prod_{i\in I}(H-\alpha_i)=[\{x_i=0: \forall i\in I\}],
\]
and the relation in \eqref{eq:equivproj} has the natural interpretation that $\{x_i=0:\forall i\}=\emptyset$.

If $i_j:p_j\rightarrow \P^n$ is the inclusion of the $j$th coordinate point for $0\leq j\leq n$, then the description \eqref{eq:equivhyperplane} implies that $i_j^*\O_{\P^n}(1)=\O_{\alpha_j}$. Thus, by restricting $H$ to fixed points, we obtain
\[
i_j^*H=\alpha_j.
\]
Similarly, restricting the tangent bundle to $p_j$ and using the local coordinates $z_k=x_k/x_j$, we obtain
\[
i_j^*T\P^n=\bigoplus_{k\neq j}\C\left\{\frac{\partial}{\partial z_k} \right\}=\bigoplus_{k\neq j}\O_{\alpha_j-\alpha_k}.
\]

\begin{figure}
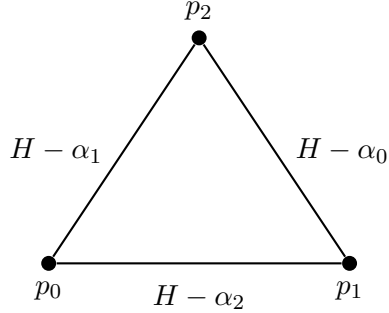

\begin{center}
\tikz{
\node[circle, fill=black, inner sep=2pt, label=below:{$p_0$}] (a) at (0,0) {};
\node[circle, fill=black, inner sep=2pt, label=below:{$p_1$}] (b) at (4,0) {};
\node[circle, fill=black, inner sep=2pt, label=above:{$p_2$}] (c) at (2,3) {};
\draw[thick] (a)--(b) node [midway, label=below:{$H-\alpha_2$}] {};
\draw[thick] (b)--(c) node [midway, label=right:{$H-\alpha_0$}] {};
\draw[thick] (c)--(a) node [midway, label=left:{$H-\alpha_1$}] {};
}
\end{center}
\caption{The $T=(\C^*)^3$-equivariant structure of $\P^2$.}
\end{figure}

\end{example}

\subsection{Localization}

Let $i:F\hookrightarrow X$ be a closed subvariety. Via Poincar\'e duality, there is a well-defined push-forward map $i_*:H^*(F)\rightarrow H^{*+2r}(X)$, where $r$ is the complex codimension of $F$ in $X$. The excess intersection formula tells us that pulling back the image of $i$ by $i^*$ is the same as multiplying by the Euler class of the normal bundle:
\begin{align*}
i^*i_*:H^*(F) &\rightarrow \mathrm{H}^*(F)\\
\phi&\mapsto e(N_{F/X})\phi.
\end{align*}
Clearly, $i^*i_*$ is not an isomorphism, because the Euler class of the normal bundle is not generally invertible. However, if $F$ is the fixed locus of a $T$-action on $X$, then we saw in Example \ref{ex:trivialaction} that the Euler class is invertible after inverting (some of) the equivariant parameters. This is the main motivation for the following result.

\begin{theorem}[Atiyah-Bott \cite{AB}]
Let $X$ be a smooth projective variety with $T=(\C^*)^n$ action such that $F=X^T$ is smooth. Upon inverting the equivariant parameters $\{\alpha_i\}$, $i_*:H_T^*(F)\rightarrow H_T^{*+2r}(X)$ is an isomorphism with inverse
\[
(i_*)^{-1}=\frac{i^*}{e(N_{F/X})}.
\]
\end{theorem}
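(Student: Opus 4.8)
The plan is to pass to the localized ring, tensoring every $\Q[\alpha_1,\dots,\alpha_n]$-module in sight with the fraction field $\Q(\alpha_1,\dots,\alpha_n)$, and to establish two facts: that $\tfrac{i^*}{e(N_{F/X})}$ is a left inverse to $i_*$, and that $i_*$ is surjective. A surjection with a left inverse is an isomorphism whose inverse is that left inverse, so the two facts together give the theorem. Since $F=X^T$ need not be connected and its components can have different codimensions, all of the following is to be read one component of $F$ at a time, with $r$ the codimension of that component.

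First I would invoke the self-intersection formula recalled just before the theorem---valid verbatim $T$-equivariantly---to get $i^*i_*\phi=e(N_{F/X})\cup\phi$. The crucial observation is that $e(N_{F/X})$ is invertible after inverting the $\alpha_i$: the torus acts trivially on $F$, and the $T$-fixed part of $TX|_F$ is exactly $TF$, so $N_{F/X}$ is an equivariant bundle on a space with trivial $T$-action whose weight-zero summand vanishes; hence it decomposes as a direct sum of bundles $V\otimes\O_\alpha$ with $\alpha\neq 0$, and by Example~\ref{ex:trivialaction} each $e(V\otimes\O_\alpha)$---and therefore their product---is invertible once $\alpha$ is inverted. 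Thus $\tfrac{i^*}{e(N_{F/X})}\circ i_*=\mathrm{id}$ after localization; in particular $i_*$ is injective.

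For surjectivity I would use the long exact sequence of the pair $(X,U)$, where $U:=X\setminus F$, together with the equivariant Thom isomorphism. A $T$-invariant tubular neighborhood of $F$ (constructed from the equivariant normal bundle) and excision identify $H_T^*(X,U)$ with $\bigoplus_Z H_T^{*-2r_Z}(Z)$, the sum over the connected components $Z$ of $F$ with $r_Z$ their codimensions, via the equivariant Thom class; under this identification the natural map $H_T^*(X,U)\to H_T^*(X)$ becomes $i_*$. The exact sequence $\cdots\to H_T^*(X,U)\xrightarrow{i_*}H_T^*(X)\to H_T^*(U)\to\cdots$ then shows $\operatorname{coker}(i_*)\hookrightarrow H_T^*(U)$, so it suffices to prove that $H_T^*(U)$ is a torsion $\Q[\alpha_1,\dots,\alpha_n]$-module, equivalently that it vanishes after localization.

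This last vanishing is the real obstacle, and it is where the hypothesis $U^T=\emptyset$ enters. The idea: by the slice theorem, $U$ is covered by $T$-invariant opens each $T$-equivariantly homotopy equivalent to an orbit $T/T_x$ times a contractible slice; for such an orbit $H_T^*(T/T_x)\cong H^*(BT_x)$ (via $T/T_x\times_T ET\cong BT_x$) is a polynomial ring in $\dim T_x$ degree-two generators, and since $x$ is not $T$-fixed we have $\dim T_x<n$, so this ring---being a module over $\Q[\alpha_1,\dots,\alpha_n]$ through a ring map with nontrivial kernel---is torsion. One then patches finitely many such pieces (a finite $T$-invariant open cover exists since $U$ is quasi-projective) via Mayer--Vietoris, using that a finite iterated extension of torsion modules is torsion; a Noetherian induction on $T$-invariant closed subsets of $U$ organizes the bookkeeping. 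Alternatively one can first reduce to $T=\C^*$ by picking a generic one-parameter subgroup $\lambda$, for which $X^\lambda=X^T$ and hence $U^\lambda=\emptyset$ and every stabilizer in $U$ is finite, so rationally $H_{\C^*}^*(U)=H^*(U/\C^*)$ is finite-dimensional and thus killed by a power of the equivariant parameter. With $H_T^*(U)$ torsion, the exact sequence yields surjectivity of $i_*$ after localization, and combined with the first step this identifies $i_*$ as an isomorphism with inverse $\tfrac{i^*}{e(N_{F/X})}$.
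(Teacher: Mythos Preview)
The paper does not give a proof of this theorem at all: it is stated with a citation to \cite{AB} and immediately used. So there is no ``paper's own proof'' to compare against; the relevant question is simply whether your argument is sound.

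Your outline is the standard Atiyah--Bott proof and is correct in its essentials. The two-step structure (left inverse from the excess intersection formula plus invertibility of $e(N_{F/X})$; surjectivity from the long exact sequence of $(X,U)$ and torsion of $H_T^*(U)$) is exactly how the original argument goes. A couple of small comments: when you say $H^*(BT_x)$ is a polynomial ring in $\dim T_x$ generators, you are implicitly using rational coefficients to kill the finite-group part of the stabilizer (closed subgroups of $(\C^*)^n$ need not be connected), which is fine here since the paper works over $\Q$ throughout. Your alternative reduction to $T=\C^*$ via a generic one-parameter subgroup is also standard and works. The only place where one might want more detail is the patching step---a careful inductive Mayer--Vietoris over a finite $T$-invariant cover, or equivalently the equivariant Leray spectral sequence---but you have identified the right ingredients.
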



\begin{remark}
While the fixed locus is required to be smooth, it is almost always reducible, having many connected components. When applying the localization theorem, it is often the case that computations boil down to a clever combinatorial packaging of torus fixed loci.
\end{remark}

\begin{remark}
The localization theorem holds in much greater generality than stated here. For the purposes of these notes, it is important to note that it remains true, for example, if \emph{variety} is replaced with \emph{Deligne-Mumford stack}.
\end{remark}

Recall that the integration map is defined by $\int_X-:=\pi_*(-):H_{(T)}^*(X)\rightarrow H_{(T)}^*(\mathrm{pt})$ where $\pi:X\rightarrow\mathrm{pt}$ can be interpreted with or without a $T$-action on $X$. As an immediate corollary of the localization theorem, we see that
\[
\int_X\phi=\int_Xi_*\left(\frac{i^*\phi}{e(N_{F/X})}\right)=\int_F\frac{i^*\phi}{e(N_{F/X})},
\]
where the last step uses the fact that $F\rightarrow\mathrm{pt}$ factors through $X$. Thus, we obtain a powerful strategy for computing $\int_X\phi$:
\begin{enumerate}
\item[Step 1.] Put a $T$-action on $X$.
\item[Step 2.] Choose a lift of $\phi$ to $H_T^*(X)$.
\item[Step 3.] Compute $\int_F\frac{i^*\phi}{e(N_{F/X})}$.
\end{enumerate}

\begin{remark}
In these notes, the classes we integrate will always be given by Chern classes of vector bundles, and the choice of lift in Step 2 corresponds to the choice of a $T$-action on the corresponding vector bundle.
\end{remark}

\begin{remark}
Since $F$ often has many components, the integral in Step 3 can be a rather large sum. While each summand is typically computable, it can be highly nontrivial to package the sum in a nice way. However, the choice of lift in Step 2 buys us some flexibility. Often by lifting the class cleverly, you can get it to vanish on some of the fixed loci.
\end{remark}

The following examples provide the first applications of this strategy.

\begin{example}
The Euler characteristic of $\P^n$ is the integral of the Euler class of the tangent bundle:
\[
\chi(\P^n)=\int_{\P^n}e(T\P^n).
\]
Using the standard action of $(\C^*)^{n+1}$ on $\P^n$ from Example \ref{ex:projective} (or any other torus action with the same fixed points), we compute
\[
\int_{\P^n}e(T\P^n)=\sum_{j=0}^n\frac{e(i_j^*T\P^n)}{e(N_{p_j/X})}=\sum_{j=0}^n\frac{e(i_j^*T\P^n)}{e(i_j^*T\P^n)}=n+1.
\]
\end{example}

\begin{exercise}
Use localization to compute $\chi(\mathrm{Gr}(k,n))$.
\end{exercise}

\begin{example}
We know that $\int_{\P^2}H^2=1$, so let's try to recover this using localization. If we choose the canonical lift of $H$, as described in Example \ref{ex:projective}, we compute
\[
\int_{\P^2}H^2=\sum_{j=0}^2\frac{i_j^*H^2}{e(i_j^*T\P^2)}=\frac{\alpha_0^2}{(\alpha_0-\alpha_1)(\alpha_0-\alpha_2)}+\frac{\alpha_1^2}{(\alpha_1-\alpha_0)(\alpha_1-\alpha_2)}+\frac{\alpha_2^2}{(\alpha_2-\alpha_0)(\alpha_2-\alpha_1)}=\dots=1,
\]
where the last step required a bit of magical cancellation. If, instead, we lift the class $H\in H^*(\P^2)$ to $H-\alpha_2\in H_T^*(\P^2)$, then the computation would only involve two summands:
\[
\int_{\P^2}H^2=\dots=\frac{(\alpha_0-\alpha_2)^2}{(\alpha_0-\alpha_1)(\alpha_0-\alpha_2)}+\frac{(\alpha_1-\alpha_2)^2}{(\alpha_1-\alpha_0)(\alpha_1-\alpha_2)}=\dots=1,
\]
and the cancellation becomes slightly less magical. Thus, we see how different choices of lifts can change the complexity of the localization computations.
\end{example}

\begin{exercise}
Find a lift of $H^n$ so that only one fixed point contributes to the computation of $\int_{\P^n}H^n=1$.
\end{exercise}

\begin{exercise}\label{ex:localization}
Let $T=(\C^*)^{n+1}$ act on $\M_{0,0}(\P^n,d)$ by post-composing each stable map with the standard action of $T$ on $\P^n$.
\begin{enumerate}
\item[(a)] Prove that the stable map
\begin{align*}
\P^1&\rightarrow\P^n\\
[x_0,x_1]&\mapsto [x_0^d,y_0^d,0,\dots,0]
\end{align*}
is a fixed point of the $T$-action.
\item[(b)] What is the rank of the normal bundle to this fixed point?
\item[(c)] What is the Euler class of the normal bundle to this fixed point?
\end{enumerate}
\end{exercise}

\section{Fan-Jarvis-Ruan-Witten theory of the quintic threefold}

This lecture introduces FJRW theory and the genus-zero FJRW mirror theorem. We do not aim for completeness in our development of FJRW theory; rather, our intent is to introduce the structures that will be most important for the proof of the mirror theorem. For the reader that would like to learn more, we provide several exercises and specific references to places in the literature where the foundations are developed more thoroughly.

\subsection{FJRW theory of the quintic threefold}

FJRW invariants are a special class of intersection numbers on \emph{moduli spaces of $5$-spin curves}. The closed points of the moduli space of $5$-spin curves are defined by
\[
\M_{g,(m_1,\dots,m_n)}^{1/5}=\left\{ (C;q_1,\dots,q_n;L;f)\right\}/\sim
\]
where
\begin{itemize}
\item $(C;q_1,\dots,q_n)$ is a stable, $n$-pointed, genus-$g$ orbifold curve, with orbifold structure only at the marks and nodes;
\item $L$ is an orbifold line bundle on $C$ such that
\begin{itemize}
\item $L$ has multiplicity $m_i$ at $q_i$, i.e. near $q_i$, the total space of $L$ is a quotient by the cyclic isotropy group $\mu_5=\langle\xi=\mathrm{e}^{2\pi\mathrm{i}/5}\rangle$ acting by $\xi\cdot(x,v)=(\xi x,\xi^{m_i} v)$ with $m_i\in\{1,2,3,4\}$;
\item $L$ satisfies the kissing condition at each node, meaning that the multiplicities on each branch satisfy $m+m'=0 \mod 5$ (if $m=m'=0$, then $C$ and $L$ have trivial orbifold structure near that node);
\end{itemize}
\item $f$ is an isomorphism 
\[
f:L^{\otimes 5}\stackrel{\cong}{\longrightarrow}\omega_{C,\log}:=\omega_C\left(\sum_i5[q_i]\right),
\]
where $[q_i]$ is the orbifold divisor of the marked point ($\deg([q_i])=1/5$).
\end{itemize}
An isomorphism of two $5$-spin curves $(C;q_1,\dots,q_n;L;f)\sim(C';q_1',\dots,q_n';L;f)$ consists of a pair of isomorphisms $g:(C;q_1,\dots,q_n)\rightarrow (C',q_1',\dots,q_n')$ and $h:g^*L'\rightarrow L$ such that $h^*f=g^*f'$. We use $\M_{g,n}^{1/5}$ to denote the disjoint union over all possible multiplicity vectors.

\begin{remark}
If this is your first encounter with the moduli spaces of spin curves, then you might naturally be wondering about the necessity of the orbifold structure and the role of the multiplicities. Without orbifold structure at the marked points, a fifth root of $\omega_{C,\log}$ would only exist if $2g-2+n\in 5\Z$, so most of these moduli spaces would be empty. Allowing orbifold structure gives us the flexibility of line bundles with fractional degree and results in a much richer class of moduli spaces with which we can work. More importantly, perhaps, is the observation that, regardless of the orbifold structure at the marked points, the orbifold structure at the nodes is necessary in order to obtain a proper moduli space. For example, if $g=6$ and $n=0$, then any smooth curve has a (non-orbifold) fifth root of $\omega_{C}$, but there does not exist a (non-orbifold) fifth root on any limit point where the curve splits as a $g=1$ curve meeting a $g=5$ curve at a node.
\end{remark}

The moduli space $\M_{g,\vec m}^{1/5}$ is a smooth Deligne-Mumford stack of dimension $3g-3+n$ and support a \emph{virtual funamental class} of (complex) dimension $2n-\sum m_i$:
\[
[\M_{g,\vec m}^{1/5}]^\vir\in H_{4n-2\sum m_i}(\M_{g,\vec m}^{1/5}).
\]
The construction of the virtual class is highly non-trivial, but has been carried out in several equivalent settings \cite{PV,C,FJR,CLL}. FJRW invariants are defined by
\[
\langle\phi_{m_1}\psi^{a_1}\cdots\phi_{m_n}\psi^{a_n}\rangle_{g,n}^{\mathrm{FJRW}}:=\int_{[\M_{g,\vec m}^{1/5}]^\vir}\psi_1^{a_1}\cdots\psi_n^{a_n}\in\Q,
\]
where $\psi_i$ is the cotangent line class at the $i$th marked point on the coarse underlying curve $|C|$.

The general construction of the virtual class is far beyond the scope of these lectures; however, the construction simplifies a great deal in genus zero. In particular, the genus-zero virtual class can be defined by
\[
\left[\M_{0,\vec m}^{1/5}\right]^{\vir}:=\left[\M_{0,\vec m}^{1/5}\right]\cap e\left( (R^1\pi_*\L^{\oplus 5})^\vee \right),
\]
where $\pi:\mathcal{C}\rightarrow\M_{0,\vec m}^{1/5}$ is the universal curve and $\L$ is the universal line bundle over $\mathcal{C}$.

To gain a slightly better appreciation of the genus-zero virtual class, it is instructive to verify that (a) $R^1\pi_*\L$ is, in fact, a vector bundle, so its Euler class is well-defined, and (b) the virtual dimension is $2n-\sum m_i$. In order to make these verifications, let us first say a few things about orbifold line bundles. Suppose $C$ is a smooth curve. Locally at $q_i$, sections of $L$ correspond to sections in the orbifold coordinates that are equivariant with respect to the isotropy group action. In other words, local sections are of the form $v(x)=x^{m_i}F(x^5)$ for some polynomial $F$. The variable $x^5$ is a local coordinate on the coarse underlying curve. Thus, we see that the orbifold multiplicity $m_i$ imposes a vanishing of degree $m_i/5$ at each orbifold point. We define the round-down of $L$ by
\[
|L|:=L\otimes\O\left(-\sum m_i[q_i]\right).
\]
It follows from the discussion above that $|L|$ is pulled back from the coarse underlying curve $|C|$, and, since rounding up/down by $m_i$ identifies sections, we have $H^i(C,L)=H^i(|C|,|L|)$ for $i=0,1$. In particular, these observations allow us to import the classical Riemann-Roch theorem to this orbifold setting:
\[
H^0(C,L)-H^1(C,L)=\deg(L)+1-g-\sum\frac{m_i}{5}.
\]

Using these observations, the reader is encouraged to work out the following verifications.

\begin{exercise}\label{ex:concave}
Prove that $R^1\pi_*\L$ is a vector bundle by showing that $H^0(C,L)=0$ for any $(C,q_1,\dots,q_n,L,f)\in \M_{0,\vec m}^{1/5}$. (Hint: compute $\deg(|L|)$ on each irreducible component.) In our definition, we do not allow $m_i=0$. Would your proof still work if one of the $m_i$ is allowed to be zero?
\end{exercise}

\begin{exercise}
Use Riemann-Roch to prove that the genus-zero virtual class lies in complex dimension $2n-\sum m_i$.
\end{exercise}

There are a few additional properties concerning the FJRW invariants that are worth mentioning. First of all, the multiplicity-one insertion $\phi_1$ plays a special role in this story. In particular, there is a forgetful map
\[
\mathrm{f}:\M_{g,(\vec m,1)}^{1/5}\rightarrow\M_{g,\vec m}^{1/5}.
\]
Roughly speaking, this map rounds down $L$ at the last marked point, then forgets that point and stabilizes, if necessary. This only works with $\phi_1$ insertions because rounding down a $5$-spin bundle at a $\phi_1$ insertion produces a $5$-spin bundle on the curve without that marked point:
\[
L^{\otimes 5}\cong\omega_{C}\left(\sum_{i=1}^{n+1}5[q_i]\right)\Longrightarrow \left( L\otimes\O\left(-[q_{n+1}]\right)\right)^{\otimes 5}\cong \omega_{C}\left(\sum_{i=1}^{n}5[q_i]\right).
\]

\begin{exercise}
Prove that $\mathrm{f}^*\left[\M_{0,\vec m}^{1/5}\right]^\vir=\left[\M_{0,(\vec m,1)}^{1/5}\right]^\vir$. Along with the comparison lemma for $\psi$-classes (\cite{MS}, Lemma 25.2.3), deduce the string and dilaton equations:
\[
\left\langle\phi_{m_1}\psi^{a_1}\cdots\phi_{m_n}\psi^{a_n}\cdot \phi_1\right\rangle_{0,n+1}^\mathrm{FJRW}=\sum_{i=1}^n\left\langle\phi_{m_1}\psi^{a_1}\cdots\phi_{m_i}\psi^{a_i-1}\cdots\phi_{m_n}\psi^{a_n}\right\rangle_{0,n}^\mathrm{FJRW}
\]
and
\[
\left\langle\phi_{m_1}\psi^{a_1}\cdots\phi_{m_n}\psi^{a_n}\cdot\phi_1\psi\right\rangle_{0,n+1}^\mathrm{FJRW}=(n-2)\left\langle\phi_{m_1}\psi^{a_1}\cdots\phi_{m_n}\psi^{a_n}\right\rangle_{0,n}^\mathrm{FJRW}.
\]
\end{exercise}

A final property of the genus-zero FJRW invariants that is important for our techniques concerns how the virtual class restricts to the boundary. Since we do not allow multiplicity zero at the marked points, then it would be ideal if the virtual class vanished along nodes where the multiplicities were zero. Essentially, this implies that the genus-zero virtual class restricts to the boundary in a way that is compatible with the gluing map 
\[
\mathrm{g}:\M_{0,(\vec m_I)+1}^{1/5}\times\M_{0,(\vec m_J)+1}^{1/5}\rightarrow\M_{0,\vec m}^{1/5}.
\]
This is indeed the case, as the next exercise verifies.

\begin{exercise}
Let $D\subset\M_{0,\vec m}^{1/5}$ denote a divisor where the node has multiplicity $m=m'=0$. Prove that $\left[\M_{0,\vec m}^{1/5}\right]^\vir\cap D=0$.
\end{exercise}

\begin{remark}
This exercise is a bit harder than the previous ones. The trick is to use the normalization sequence and show that $R^1\pi_*\L$ contains a sub-line bundle that has a trivial first Chern class (c.f. \cite{CR}, Lemma 4.1.1).
\end{remark}

\subsection{The genus-zero FJRW mirror theorem}

In order to state the mirror theorem, it is useful to interpret FJRW invariants as multi-linear functions on the vector space $V=\C\{\phi_1,\dots,\phi_4\}$, where we have one basis element for each possible multiplicity. The space $V$ has a non-degenerate pairing defined by $(\phi_i,\phi_j)=\frac{1}{5}\delta_{i+j=5}$, and we define the dual basis of $V$ by $\phi^i=5\phi_{i-1}$.

The two main players in the mirror theorem are typically called the $J$-function and the $I$-function. The $J$-function is the following formal generating series of genus-zero FJRW invariants:
\[
J(\t,z):=z\phi_1+\mathbf{t}(-z)+\sum_{n\geq 2 \atop 1\leq m\leq 4} \frac{\phi^m}{n!}\left\langle \t(\psi)^n\frac{\phi_m}{z-\psi}\right\rangle_{0,n+1}^\mathrm{FJRW},
\]
where
\begin{itemize}
\item $\t(z)=\sum t_k^m\phi_mz^k\in V[[z]]$ and $t_k^m$ are considered as formal variables,
\item $\t(\psi)^n=\t(\psi_1)\cdots\t(\psi_n)$ is expanded linearly, and
\item $(z-\psi)^{-1}$ is expanded as a geometric series in $\psi$.
\end{itemize}
The $I$-function is the explicit formal series defined by
\[
I(q,z):=z\sum_{d\geq 0}\frac{q^d}{z^dd!}\left(\prod_{0\leq k<\frac{d+1}{d} \atop \langle k \rangle = \langle \frac{d+1}{5}\rangle}(kz)^5 \right)\phi_{d+1}.
\]
By counting the number of terms in the product, one sees that $I(q,z)$ can be written in the form
\[
I(q,z)=I_0(q)z\phi_1+I_1(q)\phi_2+I_2(q)\frac{\phi_3}{z}+I_3(q)\frac{\phi_4}{z^2},
\]
and we write
\[
I_+(q,z):=I_0(q)z\phi_1+I_1(q)\phi_2,
\]
for the part of $I(q,z)$ with non-negative powers of $z$. The FJRW mirror theorem can be stated as follows.

\begin{theorem}[Chiodo-Ruan \cite{CR}]\label{FJRWMT}
Set \[\tau=I_+(q,-z)+z\phi_1.\] Then \[J(\tau,z)=I(q,z).\]
\end{theorem}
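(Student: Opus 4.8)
The plan is to prove the mirror theorem by exploiting the structure of genus-zero localization on an auxiliary moduli space, repackaged as a statement about a Lagrangian cone in Givental's symplectic formalism. Concretely, I would first recall that the $J$-function (suitably interpreted as a family of points parametrized by $\tau$) generates the \emph{overruled Lagrangian cone} $\mathcal{L}_{\mathrm{FJRW}}$ in the symplectic vector space $V((z^{-1}))$, and that the mirror theorem is equivalent to the assertion that the explicit series $I(q,z)$ lies on this cone. The reduction from ``$I$ lies on $\mathcal{L}_{\mathrm{FJRW}}$'' to the stated identity $J(\tau,z) = I(q,z)$ with $\tau = I_+(q,-z)+z\phi_1$ is a formal consequence of the $\mathrm{String}$ equation and the divisor-type behaviour recorded in the exercises above: any point on the cone lying in the appropriate chart is of the form $J(\tau,z)$ for $\tau$ equal to the $z^{\geq 0}$-truncation of that point, and reading off the truncation of $I$ gives precisely $I_+(q,-z)+z\phi_1$. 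So the entire content is the cone membership of $I$.

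To establish that $I(q,z)$ lies on $\mathcal{L}_{\mathrm{FJRW}}$, I would use the characterization of the cone via \emph{localization recursion and polynomiality}: a $V((z^{-1}))$-valued series built from a graph sum lies on the cone precisely when (i) its ``vertex contributions'' at each $\phi_m$ satisfy a recursion in which the $z\to$ (pole) specialization reproduces lower-order terms with explicit edge factors, and (ii) a certain ``principal part plus polynomial'' decomposition holds after multiplying by the inverse of an Euler class. This is the abstract skeleton behind \cite{CCIT} and \cite{RR}. The strategy is then: set up a $T=(\C^*)$-action (via the $\C^*$-action scaling the universal line bundle $\L$, or equivalently a $\P^1$-target localization after realizing $5$-spin invariants through a $\C^*$-equivariant master space) on an auxiliary space whose fixed-point contributions, summed over graphs, produce on one side the $J$-function and on the other side a generating series whose closed form is manifestly $I(q,z)$. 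The coefficients $\prod_{0\leq k<(d+1)/5,\ \langle k\rangle = \langle (d+1)/5\rangle}(kz)^5$ appearing in $I$ should emerge as the product of Euler classes of $H^1$ of the five copies of the restricted line bundle over a degree-$d$ orbifold $\P^1$-bubble, exactly as in the computation requested in Exercise~\ref{ex:localization}, and the factor $q^d/(z^d d!)$ as the automorphism-and-cotangent contribution of the distinguished rational tail.

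The key steps, in order: (1) Identify the relevant auxiliary moduli space and its $\C^*$-action, and enumerate the fixed loci by decorated graphs, isolating the vertices over the relevant ``tail'' and the edge over the bubble; (2) Compute the edge contribution explicitly — this is where the hypergeometric product in $I(q,z)$ is produced, using the round-down description $|L| = L\otimes\O(-\sum m_i[q_i])$ together with orbifold Riemann-Roch exactly as set up before Exercise~\ref{ex:concave}, and using the vanishing $H^0(C,L)=0$ from that exercise to ensure $R^1\pi_*\L^{\oplus 5}$ restricts to an honest Euler class on each locus; (3) Package the graph sum so that the non-distinguished vertices assemble into a $J$-function evaluated at an argument, and track the boundary-vanishing from the last exercise to ensure the multiplicity-zero nodes do not contribute spurious terms; (4) Extract the resulting functional equation and recognize it, via the string equation, as the cone-membership statement; (5) Read off $\tau$ from $I_+$. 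The main obstacle I expect is Step (2)–(3): controlling the recursive structure of the localization contributions precisely enough to see that the sum over \emph{all} graphs (not just the simplest one) telescopes into a single $J$-function value, rather than an uncontrolled mess. This is the ``clever packaging of localization recursions'' that the introduction flags as the heart of the matter; everything else is either formal (the symplectic-formalism reduction) or a bookkeeping exercise in orbifold Riemann-Roch.
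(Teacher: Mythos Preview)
Your proposal is correct in its essentials and draws on the same underlying localization input, but it is packaged differently from the paper. You route everything through Givental's Lagrangian cone $\mathcal{L}_{\mathrm{FJRW}}$ and the abstract CCIT-style characterization (recursion + polynomiality), then deduce the identity $J(\tau,z)=I(q,z)$ by reading off the $z^{\geq 0}$-truncation. The paper deliberately avoids the cone formalism: it constructs a concrete auxiliary moduli space $\G\M_{0,\vec m,d}^{1/5}$ (5-spin curves equipped with an effective degree-$d$ divisor $D$ and a degree-one map $g:C\to\P^1$), defines a single interpolating series $\J(\t,q,z)$ as an integral over the distinguished $\C^*$-fixed locus $\Gamma_0$, and verifies directly that $\J(\t,0,z)=J(\t,z)$ and $\J(0,q,z)=I(q,z)$. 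The mirror theorem then reduces to the identity $\J(t,q,z)=\J(t+\tau,0,z)$, which is proved by extracting relations from the polynomiality in $z$ of the equivariant integral $\int_{[\G\M]^{\vir}}\mathrm{ev}_D^*(0)\cdot\mathrm{ev}_{n+1}^*(\infty)$ and observing that both sides satisfy the same recursion in $(d,n)$.

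Two remarks on the differences. First, your description of the auxiliary space is vague (``scaling $\L$, or equivalently a $\P^1$-target localization''); the paper's choice is the second of these, and the specific stability condition involving $D$ is what makes the $\t=0$ computation collapse to a single fixed point and produce the hypergeometric product cleanly. Second, your Step~(3) anticipates a telescoping of complicated graph sums as the main obstacle, but in the FJRW setting the fixed-locus combinatorics is much tamer than in GW theory: there are no multi-edge trees, only the configurations in the paper's Figures~3--4, and the real work is the polynomiality-based recursion rather than any graph-sum reorganization. What your cone framing buys is portability and conceptual unity with the broader literature; what the paper's hands-on framing buys is that a reader can follow every step as an explicit residue or integral without ever needing to know what a Lagrangian cone is.
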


\begin{remark}
The change of variables ensures that $J(\tau,z)=I(q,z)$ mod $z^{-1}$. The content of the theorem is that the coefficients of negative powers of $z$ also agree.
\end{remark}

Upon further inspection, the reader might notice that the formulation in Theorem \ref{FJRWMT} is not quite the same as that in \cite{CR}. However, it is not too hard to prove that they are equivalent.
\begin{exercise}
Prove that Theorem \ref{FJRWMT} is equivalent to the more standard formulation:
\[
J\left(\frac{I_1(q)}{I_0(q)}\phi_2,z\right)=\frac{I(q,z)}{I_0(q)}.
\]
(Hint: use the dilaton equation.)
\end{exercise}

The formulation in the previous exercise immediately determines the genus-zero FJRW invariants with any number of $\phi_2$ insertions and one additional insertion, possibly with a $\psi$-class. However, the next exercise shows that, actually, all genus-zero FJRW invariants are determined by the mirror theorem.

\begin{exercise}
Prove that the formulation in the previous exercise determines \underline{all} genus-zero FJRW invariants. (Hint: use the string and dilaton equations along with the formula for the virtual dimension.)
\end{exercise}

\section{Proof of the Fan-Jarvis-Ruan-Witten mirror theorem}\label{lecture:proofFJRW}

In this lecture, we present a proof of the FJRW mirror theorem, as stated in Theorem \ref{FJRWMT}, by showing that it is a consequence of certain localization relations on auxiliary moduli spaces that have a natural $T=\C^*$ action.

\subsection{Auxiliary moduli spaces}

Consider moduli spaces
\[
\G\M_{0,\vec m,d}^{1/5}=\left\{ (C;q_1,\dots,q_n;L;D;f;g)\right\}/\sim
\]
where
\begin{itemize}
\item $(C;q_1,\dots,q_n)$ is a pre-stable, $n$-pointed, genus-zero orbifold curve, with orbifold structure only at the marks and nodes;
\item $L$ is an orbifold line bundle on $C$ such that
\begin{itemize}
\item $L$ has multiplicity $m_i$ at $q_i$, with $m_i\in\{1,2,3,4\}$;
\item $L$ satisfies the kissing condition at each node;
\end{itemize}
\item $D$ is an effective degree-$d$ divisor on $C$, supported away from the marks and nodes,
\item $f$ is an isomorphism 
\[
f:L^{\otimes 5}\stackrel{\cong}{\longrightarrow}\omega_{C,\log}\otimes\O(-D);
\]
\item $g:C\rightarrow \P^1$ is a degree-$1$ map; and
\item all of this data satisfies the stability condition: $\omega_{C,\log}\otimes\O(\epsilon D)\otimes g^*\O_{\P^1}(3)$ is ample for all $\epsilon>0$.
\end{itemize}
Isomorphisms are required to preserve the divisor $D$ and commute with the map $g$.

These conditions are a lot to swallow, so let us parse the main ideas. If $d=0$, then $\G\M_{0,\vec m,0}^{1/5}$ parametrizes usual $5$-spin curves along with an additional degree-one map to $\P^1$. This additional map parametrizes one irreducible component $C_0\subseteq C$, and the stability condition simply says that $C_0$ is always stable, regardless of how many special points it has, while every other component is stable if and only if it has at least three special points. If $d\neq 0$, then any non-parametrized component is stable if and only if it satisfies one of the following two conditions (see Figure \ref{fig:stable}):
\begin{enumerate}
\item it has at least three special points (marked points or nodes), or
\item it has two special points and intersects $\mathrm{Supp}(D)$ nontrivially.
\end{enumerate}

\begin{figure}
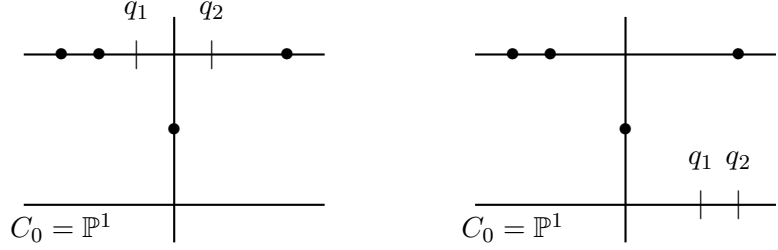

\begin{center}
\tikz{
\draw[thick] (0,0) -- (4,0);
\draw[thick] (2,-.5) -- (2,2.5);
\draw[thick] (0,2) -- (4,2);
\node [] at (.5,-.3) {$C_0=\P^1$};
\node [] at (2,1) {$\bullet$};
\node [] at (.5,2) {$\bullet$};
\node [] at (1,2) {$\bullet$};
\node [label=above:{$q_1$}] at (1.5,2) {$|$};
\node [label=above:{$q_2$}] at (2.5,2) {$|$};
\node [] at (3.5,2) {$\bullet$};

\draw[thick] (6,0) -- (10,0);
\draw[thick] (8,-.5) -- (8,2.5);
\draw[thick] (6,2) -- (10,2);
\node [] at (6.5,-.3) {$C_0=\P^1$};
\node [] at (8,1) {$\bullet$};
\node [] at (6.5,2) {$\bullet$};
\node [] at (7,2) {$\bullet$};
\node [] at (9.5,2) {$\bullet$};
\node [label=above:{$q_1$}] at (9,0) {$|$};
\node [label=above:{$q_2$}] at (9.5,0) {$|$};
}
\end{center}
\caption{The left-hand image depicts a stable element of $\G\M_{0,2,4}^{1/5}$, where the dashes represent marked points and the dots represent points in the support of $D$. The right-hand image depicts an unstable object, because no amount of degree in $D$ will stabilize the top component.}\label{fig:stable}
\end{figure}

As in the case of usual $5$-spin curves, $\G\M_{0,\vec m,d}^{1/5}$ has a virtual class, defined by
\[
\left[\G\M_{0,\vec m,d}^{1/5}\right]^\vir:=\left[\G\M_{0,\vec m,d}^{1/5}\right]\cap e\left( (R^1\pi_*\L^{\oplus 5})^\vee \right),
\]
and this virtual class vanishes on divisors where the node has multiplicity zero. Unlike the usual case of $5$-spin curves, this moduli space has a torus action. More specifically, define an action of $T=\C^*$ on $\G\M_{0,\vec m,d}^{1/5}$ by post-composing the map $g:C\rightarrow\P^1$ with the action $t\cdot[x_0,x_1]=[tx_0,x_1]$ on $\P^1$. This is equivalent to acting directly on $|C_0|\stackrel{g}{=}\P^1$. Let $z$ denote the $T$-equivariant parameter, so that
\[
c_1(T\P^1|_0)=z=-c_1(T\P^1|_\infty),
\]
where $0=[1,0]$ and $\infty=[0,1]$. A point of $\G\M_{0,\vec m,d}^{1/5}$ is fixed by the $T$-action if and only if
\[
C_0\cap\left(\left\{\overline{C\setminus C_0}\right\}\cup\{q_1,\dots,q_n\}\cup \mathrm{Supp(D)}\right)\subseteq\{0,\infty\}.
\]
See Figure \ref{fig:fixed}.

\subsection{Unification of the $J$- and $I$-functions}

Let $i:\Gamma_0\hookrightarrow \G\M_{0,n+1,d}^{1/5}$ be the $T$-fixed locus where $q_{n+1}=\infty\in C_0$, and define 
\[
\J(\t,q,z):=-z^2\sum_{{n\geq 0 \atop d\geq 0}\atop 1\leq m\leq 4 }\frac{q^d}{n!}\int_{\Gamma_0}\frac{i^*\left[\G\M_{0,n+1,d}^{1/5}\cap\t(\psi)^n\cap \phi_m\right]^\vir}{e(N_{\Gamma_0})}\phi^m.
\]

\begin{figure}
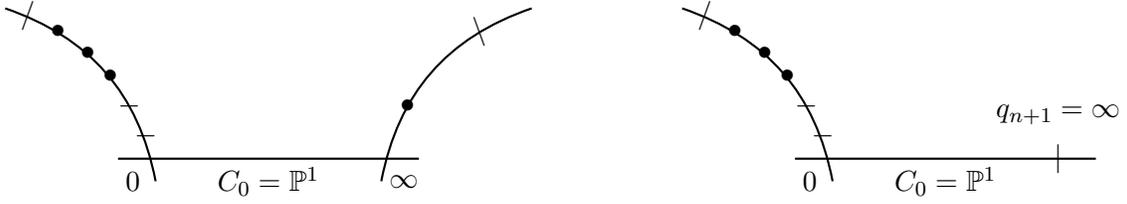

\begin{center}
\tikz{
\draw[thick] (0,0) -- (4,0);
\draw[thick] (.5,-.3) edge[-,bend right] (-1.5,2);
\draw[thick] (3.5,-.3) edge[-,bend left] (5.5,2);
\node [] at (2,-.3) {$C_0=\P^1$};
\node [] at (.2,-.3) {$0$};
\node [] at (3.8,-.3) {$\infty$};

\node [] at (.37,.3) {$-$};
\node [] at (.15,.7) {$-$};
\node [] at (-.1,1.1) {$\bullet$};
\node [] at (-.4,1.4) {$\bullet$};
\node [] at (-.8,1.7) {$\bullet$};
\node [] at (-1.2,1.9) {$/$};
\node [] at (3.85,.7) {$\bullet$};
\node [] at (4.8,1.7) {$\backslash$};

\draw[thick] (9,0) -- (13,0);
\draw[thick] (9.5,-.3) edge[-,bend right] (7.5,2);
\node [] at (11,-.3) {$C_0=\P^1$};
\node [] at (9.2,-.3) {$0$};

\node [] at (9.37,.3) {$-$};
\node [] at (9.15,.7) {$-$};
\node [] at (8.9,1.1) {$\bullet$};
\node [] at (8.6,1.4) {$\bullet$};
\node [] at (8.2,1.7) {$\bullet$};
\node [] at (7.8,1.9) {$/$};
\node [label=above:{$q_{n+1}=\infty$}] at (12.5,0) {$|$};

}
\end{center}
\caption{The left-hand image depicts a generic $T$-fixed point of $\G\M_{0,n,d}^{1/5}$, where the marked points and support of $D$ are distributed on the two branches attached to $C_0$ at $0$ and $\infty$. The right-hand image depicts the distinguished fixed locus $\Gamma_0\subset\G\M_{0,n+1,d}^{1/5}$.}\label{fig:fixed}
\end{figure}

\noindent The formal series $\J(\t,q,z)$ unifies the $J$- and $I$-functions in the following sense.

\begin{proposition}\label{prop:unify}
Setting $q=0$, 
\[
\J(\t,0,z)=J(\t,z).
\]
Setting $\t=0$, 
\[
\J(0,q,z)=I(q,z).
\]
\end{proposition}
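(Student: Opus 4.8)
# Proof Proposal for Proposition \ref{prop:unify}

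\textbf{Overall strategy.} The plan is to analyze the $T$-fixed locus $\Gamma_0 \subset \G\M_{0,n+1,d}^{1/5}$ explicitly under the two specializations, matching the localization contributions term-by-term against the definitions of $J$ and $I$. The key structural observation is that a point of $\Gamma_0$ consists of a parametrized component $C_0 = \P^1$ with $q_{n+1} = \infty$, together with whatever curves, marked points, and divisor-support points are attached at $0$ and $\infty$; stability forces $C_0$ to carry no $\psi$-class data of its own, so the geometry of $\Gamma_0$ decomposes according to where the degree $d$ of $D$ and the marks $q_1,\dots,q_n$ are distributed. I would set up this decomposition first, identifying each component of $\Gamma_0$ with a product of an ordinary $5$-spin moduli space (carrying the marks at $\infty$) and a ``rubber'' or degree-over-$0$ factor (carrying the divisor data at $0$), and computing $e(N_{\Gamma_0})$ as a product of the corresponding normal-bundle and smoothing-node contributions, with $z = c_1(T\P^1|_0)$.

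\textbf{The case $q=0$.} Setting $q=0$ kills every term with $d \geq 1$, so only $d=0$ survives. When $d=0$ the divisor $D$ is empty, the parametrized $\P^1$ has only the node at $0$ and the mark $q_{n+1}$ at $\infty$ as special points, and $C_0$ is rigid. The fixed locus $\Gamma_0$ is then simply $\M_{0,n+1}^{1/5}$ itself (the universal curve attached at $0$), the normal bundle to $\Gamma_0$ is the smoothing deformation of the node at $0$, whose Euler class is $z - \psi$ where $\psi$ is the cotangent line at the node on the spin side. The virtual class pulls back correctly because $[\M^{1/5}_{0,\vec m}]^{\vir}$ vanishes on multiplicity-zero nodes (the exercise preceding Section \ref{lecture:proofFJRW}), so only the multiplicity-compatible gluing contributes. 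Carrying out the integral, the factor $-z^2 \cdot \frac{1}{e(N_{\Gamma_0})} = -z^2 \cdot \frac{1}{z(z-\psi)}$ combined with the $\psi$-expansion reproduces exactly the terms $z\phi_1 + \t(-z) + \sum \frac{\phi^m}{n!}\langle \t(\psi)^n \frac{\phi_m}{z-\psi}\rangle$; the $z\phi_1$ and $\t(-z)$ pieces come from the unstable $n=0,1$ cases where $C_0$ absorbs the mark data and the corresponding ``moduli space'' is a point.

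\textbf{The case $\t=0$.} Setting $\t = 0$ removes all marked points except $q_{n+1}$, so $n=0$ and we are looking at $\G\M_{0,1,d}^{1/5}$ with the single mark at $\infty$. Now the parametrized $\P^1$ carries the mark at $\infty$, the full divisor $D$ of degree $d$, and possibly a node at $0$. On the fixed locus all $d$ points of $D$ must lie at $0$ (they cannot sit at a non-torus-fixed point of $C_0$, and cannot be at $\infty$ where the mark is), forcing the entire non-parametrized curve and divisor to concentrate at $0$; for $d \geq 1$ stability is then automatic and $\Gamma_0$ is a point (or a finite quotient), with the spin bundle $L$ determined by $f: L^{\otimes 5} \cong \omega_{C,\log}\otimes\O(-D)$ restricted to $\P^1$. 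The multiplicity $m_{n+1}$ at $\infty$ is pinned down by a degree/Riemann-Roch count to be $m_{n+1} = d+1 \bmod 5$, which is precisely the congruence $\langle k\rangle = \langle\frac{d+1}{5}\rangle$ appearing in the $I$-function, and explains the insertion $\phi_{d+1}$. The remaining work is to compute $e(N_{\Gamma_0})$: this is a product over the $d$ points of $D$ at $0$ of tangent-line factors $kz$ for $k$ running through the appropriate range, each raised to the fifth power because of the $\L^{\oplus 5}$ in the virtual class contribution $e((R^1\pi_*\L^{\oplus 5})^\vee)$ — this is the source of the $(kz)^5$ terms. After bookkeeping the $q^d/(z^d d!)$ normalization against the overall $-z^2$, everything assembles into $I(q,z)$.

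\textbf{Main obstacle.} I expect the hard part to be the $\t=0$ computation of the equivariant Euler class of the normal bundle, specifically getting the exact range of the product $\prod_{0\leq k < \frac{d+1}{5},\ \langle k\rangle = \langle\frac{d+1}{5}\rangle}(kz)^5$ right. This requires carefully tracking how $R^1\pi_*\L$ on the $d$-fold bubbled configuration at $0$ decomposes into $T$-weight spaces — each point of $D$ sitting at $0$ contributes a piece of $H^1$ of the twisted line bundle, and the Riemann-Roch / multiplicity analysis for orbifold line bundles (as set up before Exercise \ref{ex:concave}) must be done with the equivariant weights, not just the ranks. The combinatorics of which cohomology weights appear, and matching them precisely to the stated product over $k$, is where the proof has its real content; the $q=0$ direction, by contrast, is essentially a repackaging of the definitions via the node-smoothing formula.
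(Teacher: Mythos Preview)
Your overall approach matches the paper's: analyze $\Gamma_0$ case by case and read off the localization contributions. The $q=0$ direction is essentially right in outline, though you undercount normal-bundle factors --- for $n\geq 2$ there are three: $z$ from moving the node on $C_0$ away from $0$, $-z$ from moving $q_{n+1}$ away from $\infty$, and $z-\psi$ from smoothing the node. Thus $e(N_{\Gamma_0})=-z^2(z-\psi)$ and the prefactor $-z^2/e(N_{\Gamma_0})$ collapses to $1/(z-\psi)$, not $-z/(z-\psi)$ as your formula would give.

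The $\t=0$ direction contains a genuine confusion that would derail the computation. You attribute the product $\prod(kz)^5$ to the normal bundle $e(N_{\Gamma_0})$ and say it is indexed by ``the $d$ points of $D$ at $0$'', but neither is correct. On $\Gamma_0$ the curve is irreducible, $C=C_0=\P^1$ with $D=d[0]$ and no bubbling; the normal bundle is simply $(-z)\cdot d!\,z^d$: one factor $-z$ for moving $q_1$ off $\infty$, and $d!\,z^d$ for moving the $d$ unordered points of $\mathrm{Supp}(D)$ off $0$. The product $\prod(kz)^5$ is instead the restriction of the \emph{virtual class} $e\big((R^1\pi_*\L^{\oplus 5})^\vee\big)$. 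Since $L^{\otimes 5}\cong\O(-d-1)$ one has $L\cong\O\big(-\tfrac{d+1}{5}\big)$, and an explicit \v Cech basis for $H^1(C,L)$ is $\{x_0^{-k}x_1^{k-(d+1)/5}\}$ indexed by those $k$ with $0<k<\tfrac{d+1}{5}$ and $\langle k\rangle=\langle\tfrac{d+1}{5}\rangle$; each has $T$-weight $-kz$, so the dual Euler class contributes $(kz)^5$ per basis element. The number of such $k$ is roughly $d/5$, not $d$ --- so ``each point of $D$ contributes a piece of $H^1$'' is the wrong picture. (Relatedly, the multiplicity at $\infty$ is $m\equiv -(d+1)\bmod 5$, not $d+1$; the insertion $\phi_{d+1}$ appears only after passing to the dual basis $\phi^m$, with the factor of $5$ cancelled by the order-$5$ automorphism of the spin structure.) Separating the virtual class from the normal bundle, and computing the former via a \v Cech basis rather than via $D$, is exactly the content you flagged as the ``main obstacle'', so this distinction is what you need to get straight.
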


\begin{proof}
First, consider the case $q=0$. The first three images in Figure \ref{fig:FJRWJ} depict the three types of fixed loci and how they contribute to $J(\t,z)$. We explain each of these computations in turn. 

If $n=0$, then $\deg(\omega_{C,\log})=-1$, and the moduli space is non-empty only if $m=4$, in which case $\Gamma_0$ is a point (with an order-$5$ automorphism). One checks that $H^1(C,L)=0$, so the virtual class restricts to the usual fundamental class. The normal bundle has a single factor of $-z=c_1(TC_0|_\infty)$ corresponding to moving $q_1$ away from $\infty$. Dividing by the automorphism group cancels with the $5$ in the dual basis: $\phi^4=5\phi_1$. Thus, the coefficient works out to be $z\phi_1$.

If $n=1$, then $\Gamma_0$ is still a point (with an order-$5$ automorphism). On this fixed locus, $\psi_1=c_1(TC_0^\vee|_0)=-z$. The virtual class is the usual fundamental class again, and the normal bundle has two contributions $z$ and $-z$ corresponding to moving the two marked points away from $0$ and $\infty$. Putting everything together, the coefficient in this case is $\t(-z)$.

For $n\geq 2$, $\Gamma_0\cong\M_{0,n+1}^{1/5}$. Using the normalization sequence
\[
0\rightarrow\O_C\rightarrow\O_{C_0}\oplus\O_{\overline{C\setminus C_0}}\rightarrow\O_q\rightarrow 0,
\] 
one checks that the virtual class on $\G\M_{0,n+1}^{1/5}$ restricts to the virtual class on $\M_{0,n+1}^{1/5}$. The normal bundle has two factors $z$ and $-z$ corresponding to moving the node away from $0$ and moving $q_{n+1}$ away from $\infty$, respectively, and one factor of $z-\psi$ corresponding to smoothing the node at $0$. Thus, these coefficients are equal to the $n\geq 2$ terms of $J(\t,z)$.

\begin{figure}
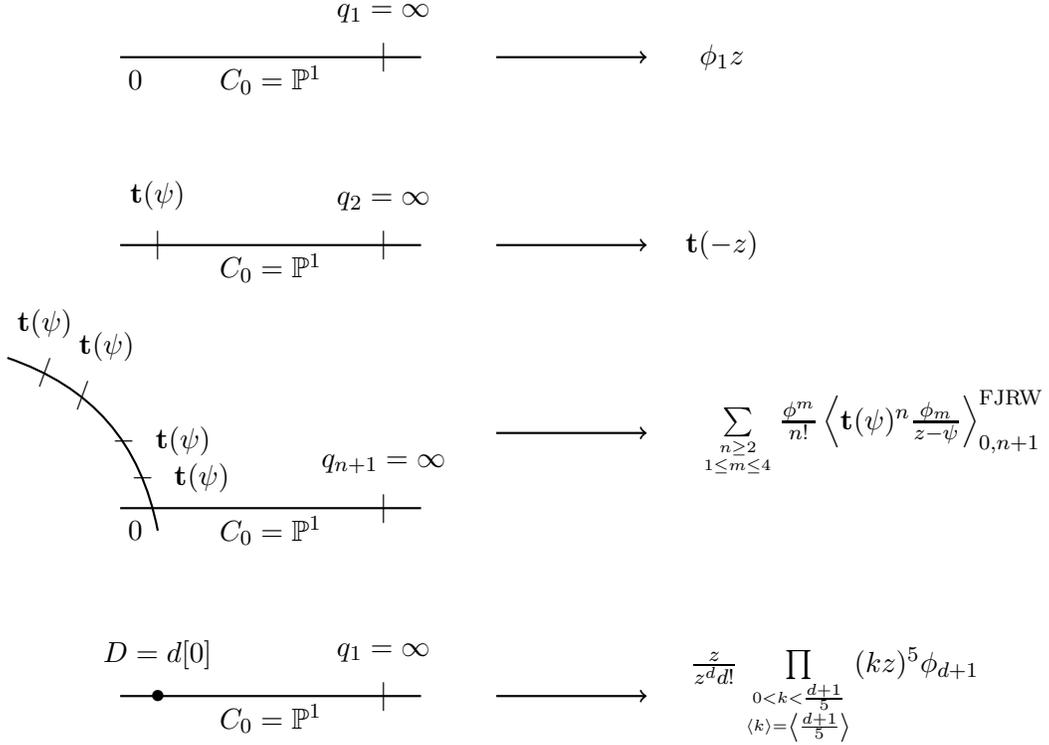

\begin{center}
\tikz{
\draw[thick] (0,0) -- (4,0);
\node [] at (2,-.3) {$C_0=\P^1$};
\node [] at (.2,-.3) {$0$};
\node [label=above:{$q_{1}=\infty$}] at (3.5,0) {$|$};
\draw[thick] (5,0) edge[->] (7,0);
\node[] at (8,0) {$\phi_1z$};

\draw[thick] (0,-2.5) -- (4,-2.5);
\node [] at (2,-2.8) {$C_0=\P^1$};
\node [label=above:{$q_{2}=\infty$}] at (3.5,-2.5) {$|$};
\node [label=above:{$\t(\psi)$}] at (.5,-2.5) {$|$};
\draw[thick] (5,-2.5) edge[->] (7,-2.5);
\node[] at (8,-2.5) {$\t(-z)$};

\draw[thick] (0,-6) -- (4,-6);
\draw[thick] (.5,-6.3) edge[-,bend right] (-1.5,-4);
\node [] at (2,-6.3) {$C_0=\P^1$};
\node [] at (.2,-6.3) {$0$};
\node [label=right:{$\t(\psi)$}] at (.3,-5.6) {$-$};
\node [label=right:{$\t(\psi)$}] at (.05,-5.1) {$-$};
\node [label=above:{$\;\;\;\;\;\;\t(\psi)$}] at (-.5,-4.5) {$/$};
\node [label=above:{$\t(\psi)$}] at (-1,-4.2) {$/$};
\node [label=above:{$q_{n+1}=\infty$}] at (3.5,-6) {$|$};
\draw[thick] (5,-5) edge[->] (7,-5);
\node[] at (10,-5) {$\sum\limits_{n\geq 2 \atop 1\leq m\leq 4} \frac{\phi^m}{n!}\left\langle\t(\psi)^n\frac{\phi_m}{z-\psi} \right\rangle_{0,n+1}^\mathrm{FJRW}$};

\draw[thick] (0,-8.5) -- (4,-8.5);
\node [] at (2,-8.8) {$C_0=\P^1$};
\node [label=above:{$q_{1}=\infty$}] at (3.5,-8.5) {$|$};
\node [label=above:{$D=d[0]$}] at (.5,-8.5) {$\bullet$};
\draw[thick] (5,-8.5) edge[->] (7,-8.5);
\node[] at (9.5,-8.5) {$\frac{z}{z^dd!}\prod\limits_{0<k<\frac{d+1}{5}\atop \langle k\rangle=\left\langle\frac{d+1}{5}\right\rangle} (kz)^5\phi_{d+1}$};
}
\end{center}
\caption{Contributions to $\J(\t,q,z)$ when $q=0$ (first three) and $\t=0$ (last one).}\label{fig:FJRWJ}
\end{figure}

Next, consider the case $\t=0$. Then $\Gamma_0$ is a point (with an order-$5$ automorphism) where $C=C_0$ and $D$ is entirely supported at $0\in C_0$ (see Figure \ref{fig:FJRWJ}). Since 
\[
L^{\otimes 5}=\omega_{C,\log}\otimes\O(-d)\cong\O(-d-1),
\]
we see that $L\cong \O\left(-\frac{d+1}{5}\right)$ and $m=-(d+1)$ mod $5$. With respect to the open cover consisting of the complement of $0$ and the complement of $\infty$, a basis of \v{C}ech sections is given by
\[
H^1(C,L)=\C\left\{x_0^{-k}x_1^{k-\frac{d+1}{5}}:0<k<\frac{d+1}{5},\; \langle k\rangle=\left\langle\frac{d+1}{5}\right\rangle\right\}.
\]
Since the $T$-action has weight $-kz$ on each basis element, we see that the virtual class restricts as
\[
e\left(H^1(C,L^{\oplus 5})^\vee \right)=\prod_{0<k<\frac{d+1}{5}\atop \langle k\rangle=\left\langle\frac{d+1}{5}\right\rangle} (kz)^5.
\]
The normal bundle contributes $-z$ corresponding to moving the marked point away from $\infty$ and $d!z^d$ corresponding to moving the support of $D$ away from $0$. Combining these contributions recovers the $I$-function.
\end{proof}

\subsection{Proof of the mirror theorem}

We now prove the FJRW mirror theorem.

\begin{proof}[Proof of Theorem \ref{FJRWMT}]
It suffices to prove
\begin{equation}\label{eq:FJRWMT}
\J(t,q,z)=\J(t+\tau,0,z)
\end{equation}
where 
\[
t=\sum_{1\leq m\leq 4}t^m\phi_m\;\;\;\;\text{ and }\;\;\;\; \tau=I_+(q,-z)+\phi_1z.
\]
If \eqref{eq:FJRWMT} holds, then setting $t=0$ recovers Theorem \ref{FJRWMT} via Proposition \ref{prop:unify}. Notice that \eqref{eq:FJRWMT} is true modulo $q$ and it is true modulo $z^{-1}$. Our strategy is to show that both sides of \eqref{eq:FJRWMT} satisfy the same recursion, which determines the coefficients recursively in $(d,n)$ (lexicographic ordering).

Begin with the left-hand side, and consider the equivariant integrals
\begin{equation}\label{eq:graphint}
\int_{\left[\G\M_{0,(\vec m,m),d}^{1/5}\right]^\vir}\mathrm{ev}_D^*(0)\cdot\mathrm{ev}_{n+1}^*(\infty)\in H_T^*(\mathrm{pt})=\Q[z]
\end{equation}
where $\mathrm{ev}_D^*(0)$ denotes the equivariant class where $g^{-1}(0)\cap \mathrm{Supp}(D)\neq\emptyset$ and $\mathrm{ev}_{n+1}^*(\infty)$ denotes the equivariant class where $q_{n+1}\in g^{-1}(\infty)$. By definition, \eqref{eq:graphint} is polynomial in $z$. Therefore, if we invert $z$ and compute the integral by localization, we obtain relations among the coefficients of negative powers of $z$.

To exhibit how these relations can be useful, let us suppose that we are interested in computing the $q^dt^{m_1}\cdots t^{m_n}\phi^m$-coefficient of $\J(t,q,z)$. Computing the integral \eqref{eq:graphint} by localization, the fixed locus $\Gamma_0$ contributes $d$ times the coefficient we are interested in (there is a $dz$ from the restriction of $\mathrm{ev}_D^*(0)$ and a $-z$ from the restriction of $\mathrm{ev}_{n+1}^*(\infty)$). Moreover, the contribution of every other fixed locus (see Figure \ref{fig:fixed}) is determined recursively because the components over $0$ and $\infty$ either have degree $<d$ (and any number of special points), or degree $=d$ but with fewer special points. Thus, with the lexicographic ordering on $(d,n)$, this gives an effective recursion determining $\J(t,q,z)$ from $q=0$ and the coefficients of the non-negative powers of $z$.

Inserting these relations as coefficients of the appropriate generating series, we can package them efficiently as the following statement:
\begin{equation}\label{eq:relations}
\left(\frac{\partial}{\partial q}\J(t,q,z),\frac{\partial}{\partial t^m}\J(t,q,-z) \right)[z^{-k}]=0 \;\;\;\;\forall k>0 \text{ and } 1\leq m\leq 4.
\end{equation}

Now consider the right-hand side, where we investigate the equivariant integrals
\[
\int_{\left[\G\M_{0,(\vec m,m)+1,0}^{1/5}\right]^\vir}\left(\mathrm{ev}_{n+2}^*(0)\frac{\partial\tau}{\partial q}\right)\cdot\mathrm{ev}_{n+1}^*(\infty).
\]
Localizing exactly as before and packaging in the appropriate generating series, we find that
\begin{equation}\label{eq:relations2}
\left(\frac{\partial}{\partial q}\J(t+\tau,0,z),\frac{\partial}{\partial t^m}\J(t+\tau,0,-z) \right)[z^{-k}]=0 \;\;\;\;\forall k>0 \text{ and } 1\leq m\leq 4,
\end{equation}
and the relations \eqref{eq:relations2} determine the $z^{-k}$ coefficients of $\J(t+\tau,0,z)$ from $q=0$ and the coefficients of the non-negative powers of $z$.

Since the recursions \eqref{eq:relations} and \eqref{eq:relations2} are identical, and they determine the series from $q=0$ and the coefficients of the non-negative powers of $z$, then the two sides of \eqref{eq:FJRWMT} are equal.
\end{proof}

\section{Gromov-Witten theory of the quintic threefold}

In this lecture, we introduce GW invariants of the quintic threefold and state the genus-zero GW mirror theorem. We also apply localization to provide an explicit algorithm for computing any genus-zero GW invariant of the quintic threefold.

\subsection{GW theory of the quintic threefold}

GW invariants are a special class of intersection numbers on \emph{moduli spaces of stable maps}. The closed points of the moduli space of stable maps are defined by
\[
\M_{g,n}(Q,d)=\left\{(C;q_1,\dots,q_n;f) \right\}/\sim
\]
where
\begin{itemize}
\item $(C;q_1,\dots,q_n)$ is a pre-stable, $n$-pointed, genus-$g$ curve;
\item $f:C\rightarrow Q$ is a map of degree $d\in H_2(\P^4)=\Z$; and
\item this data satisfies the stability condition: $\omega_{C,\log}\otimes f^*\O_{\P^4}(3)$ is ample.
\end{itemize}
An isomorphism of two points is an isomorphism of pointed curves that commutes with the maps to $Q$. The stability condition is equivalent to only allowing points with finitely-many automorphisms.

The moduli spaces $\M_{g,n}(Q,d)$ are not smooth, even as Deligne-Mumford stacks, and they can have many components of higher-than-expected dimension. It is a highly non-trivial but fundamental fact that $\M_{g,n}(Q,d)$ supports a virtual fundamental class
\[
\left[\M_{g,n}(Q,d)\right]^\vir\in H_{2n}\left(\M_{g,n}(Q,d)\right),
\]
which was constructed in \cite{BF}. For $\varphi_1,\dots,\varphi_n\in H^*(Q)$, GW invariants are defined by
\[
\langle\varphi_{1}\psi^{a_1}\cdots\varphi_{n}\psi^{a_n}\rangle_{g,n,d}^\mathrm{GW}:=\int_{[\M_{g,n}(Q,d)]^\vir}\mathrm{ev}_1^*(\varphi_1)\psi_1^{a_1}\cdots\mathrm{ev}_n^*(\varphi_n)\psi_n^{a_n}\in\Q,
\]
where $\mathrm{ev}_i:\M_{g,n}(Q,d)\rightarrow Q$ evaluates the map $f$ at the $i$th marked point, and $\psi_i$ is the cotangent line class at the $i$th marked point on the pre-stable curve $C$.

In order to state the genus-zero mirror theorem, we require the GW $J$- and $I$-functions. Let $V=H^*(Q)=\Q\{\phi_i\}$, where $\{\phi_i\}$ is a basis of cohomology and let $\{\phi^i\}$ denote the dual basis under the Poincar\'e pairing. Then for $\t(z)\in V[[z]]$, the $J$-function is defined by
\[
J(\t,z):=z+\t(-z)+\sum_{n,d,i}\frac{q^d}{n!}\left\langle \t(\psi)^n\frac{\phi_i}{z-\psi}\right\rangle_{0,n+1,d}^\mathrm{GW}\phi^i,
\]
and the $I$-function is defined by
\[
I(q,z):=z\sum_{d\geq 0}q^d\frac{\prod_{k=1}^{5d}(5H+kz)}{\prod_{k=1}^d(H+kz)^5},
\]
where $H$ is the restriction of the hyperplane class from $H^*(\P^4)$. The $I$-function should be expanded as a polynomial in $H\in H^*(Q)$. Similarly to the FJRW case, we can write
\[
I(q,z)=I_0(q)z+I_1(q)H+I_2(q)\frac{H^2}{z}+I_3(q)\frac{H^3}{z^3},
\]
and we define
\[
I_+(q,z):=I_0(q)z+I_1(q)H.
\]

The mirror theorem can be stated in the following way.

\begin{theorem}[Givental \cite{G}]\label{thm:GWMT}
Set \[\tau=I_+(q,-z)+z\phi_1.\] Then \[J(\tau,z)=I(q,z).\]
\end{theorem}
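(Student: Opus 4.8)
The plan is to mirror, nearly verbatim, the proof strategy used for the FJRW mirror theorem (Theorem \ref{FJRWMT}), since the Gromov-Witten setting is structurally parallel. First I would introduce the appropriate auxiliary moduli spaces: graph spaces $\G\M_{0,n}(Q,d)$ parametrizing genus-zero stable maps to $Q\times\P^1$ of bidegree $(d,1)$, or equivalently stable maps to $Q$ together with a degree-one map $g:C\to\P^1$ and a stability condition that always stabilizes the distinguished parametrized component $C_0$. These carry a $T=\C^*$-action induced from $t\cdot[x_0,x_1]=[tx_0,x_1]$ on $\P^1$, and their $T$-fixed loci are indexed by distributions of marked points and ``excess degree'' onto the branches meeting $C_0$ over $0$ and $\infty$, exactly as in Figure \ref{fig:fixed}. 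I would then define the unified generating series $\J(\t,q,z)$ as a sum of localization contributions over the fixed locus $\Gamma_0$ where $q_{n+1}=\infty\in C_0$, and prove the analogue of Proposition \ref{prop:unify}: setting $q=0$ recovers $J(\t,z)$, and setting $\t=0$ recovers $I(q,z)$. The computation of $\J(0,q,z)=I(q,z)$ is the one genuinely new calculation, where the fixed locus has $C=C_0\cong\P^1$ with the full excess degree concentrated over $0$; the Euler class of the obstruction bundle $H^1(C,f^*\O_{\P^4}(-5))$ together with the numerator $H^0(C,f^*\O_{\P^4}(5))$ and the normal-bundle factor $\prod_{k=1}^d(H+kz)^5$ from smoothing/deforming produce precisely the ratio $\prod_{k=1}^{5d}(5H+kz)/\prod_{k=1}^d(H+kz)^5$ defining the GW $I$-function.

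With Proposition \ref{prop:unify} in hand, it suffices to prove $\J(t,q,z)=\J(t+\tau,0,z)$ with $\tau=I_+(q,-z)+z\phi_1$, which specializes to the theorem at $t=0$. The mechanism is again that both sides satisfy the same recursion in $(d,n)$ under lexicographic ordering. On the left, I would consider the $T$-equivariant integrals
\[
\int_{[\G\M_{0,n+1}(Q,d)]^\vir}\mathrm{ev}_D^*(0)\cdot\mathrm{ev}_{n+1}^*(\infty),
\]
where in the GW setting the ``divisor evaluation'' is replaced by the natural class cutting out stable maps whose fiber over $0\in\P^1$ meets a curve component of positive degree (or, more cleanly, one works with an additional marked point $q_{n+2}$ constrained to $g^{-1}(0)$ and carrying $\mathrm{ev}_{n+2}^*\varphi$, then integrates against the generating series of such classes). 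These integrals are polynomials in $z$ by properness, so inverting $z$ and localizing yields relations among the coefficients of negative powers of $z$: the distinguished locus $\Gamma_0$ contributes (a multiple of) the coefficient being computed, while every other fixed locus contributes terms of strictly lower order in $(d,n)$. Packaging gives
\[
\left(\tfrac{\partial}{\partial q}\J(t,q,z),\,\tfrac{\partial}{\partial t^i}\J(t,q,-z)\right)[z^{-k}]=0,\qquad k>0,
\]
and the same computation on the $\G\M_{0,n+2}(Q,0)$ side, with the last insertion weighted by $\partial\tau/\partial q$, gives the identical recursion for $\J(t+\tau,0,z)$. Since both sides agree modulo $q$ and modulo $z^{-1}$, and the recursion propagates uniquely from that initial data, they coincide.

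The main obstacle, and the place where the GW proof is genuinely harder than the FJRW one, is that $\M_{0,n}(Q,d)$ and its graph-space variants are \emph{not} smooth, so one cannot treat the virtual class as a naive Euler class of a concave bundle as was done in genus zero for $5$-spin curves. Instead I would realize $Q$-invariants via the quintic obstruction bundle on graph spaces for $\P^4$: work on $\G\M_{0,n}(\P^4,d)$, which is a smooth Deligne-Mumford stack with a well-understood $T$-fixed-point structure (the graph/localization combinatorics of Kontsevich), and insert $e(R^\bullet\pi_*f^*\O_{\P^4}(5))$ — more precisely the Euler class of $\pi_*f^*\O(5)$, using that $R^1$ vanishes in genus zero — to cut down to $Q$. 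The subtlety is then bookkeeping the contributions of this Euler class over each fixed locus, including the ``pushing forward from the node'' normalization-sequence arguments (as in the $n\geq 2$ case of Proposition \ref{prop:unify}), and verifying that the recursion closes with the correct combinatorial factors. Everything else — the polynomiality-in-$z$ input, the lexicographic induction, the packaging into the pairing relations — transfers formally from the FJRW proof.
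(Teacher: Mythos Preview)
Your proposal takes a genuinely different route from the paper, and as written it contains a real gap.

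The paper does \emph{not} simply replay the FJRW argument on a graph space for $Q$. Instead it first invokes the full $(\C^*)^5$-localization on $\M_{0,n}(\P^4,d)$ from Theorem~\ref{thm:gwlocalization}: it pushes $J(\tau,z)$ forward to $H^*(\P^4)$, restricts to each torus-fixed point $p_j$ to obtain series $J_j(\tau,z)$, and sorts the localization graphs by the valence of the vertex carrying the last marked point. This yields a structural formula \eqref{eq:Jj} expressing $J_j$ in terms of a series $T_j(-z)$ with simple poles at $z=(\alpha_{j'}-\alpha_j)/d$, together with an \emph{edge-removal recursion} \eqref{eq:Jjrecursion} computing the residues of $T_j$ from $J_{j'}$ at strictly lower degree. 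The core of the proof is then a direct residue calculation showing that the fixed-point restriction $I_j(q,z)$ of the $I$-function has the same simple poles and satisfies the same edge-removal recursion. Only the residual check \eqref{eq:lastcheck} is handled by an FJRW-style $\C^*$-graph-space argument, and even there the auxiliary space $\G\M_{0,n,d}$ parametrizes curves with a divisor $D$ and \emph{no} map to $\P^4$; the target geometry enters only through the twisted virtual class built from the parameters $\alpha_j$.

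Your approach, by contrast, tries to run the FJRW argument directly on a graph space for $Q$ (or $\P^4$) using only the $\C^*$ on the parametrizing $\P^1$. The gap is your claim $\J(0,q,z)=I(q,z)$. If $\G\M_{0,n}(Q,d)$ really parametrizes \emph{stable maps} to $Q\times\P^1$ of bidegree $(d,1)$, then at $\t=0$ the locus $\Gamma_0$ is not a point: a $\C^*$-fixed degree-$d$ map from the parametrized $C_0\cong\P^1$ to $\P^4$ (with trivial action on the target) must be constant, so all of the degree lives on a bubble over $0\in C_0$, and $\Gamma_0\cong\M_{0,1}(\P^4,d)$. Integrating over that (with the node-smoothing factor $1/(z-\psi)$) recovers the small $J$-function $J(0,z)$, not the $I$-function. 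Your phrases ``excess degree concentrated over $0$'' and the normal-bundle factor $\prod_{k=1}^d(H+kz)^5$ belong to the \emph{quasimap} graph space, where degree is allowed to concentrate at base points on $C_0$; in that setting your argument does go through, but you have not introduced quasimaps, and the stable-map graph space you actually named gives the wrong answer. The paper sidesteps this entirely by first localizing with the big torus on $\P^4$, so that what remains at each fixed point is a twisted theory on curves-with-divisor, exactly parallel to the FJRW auxiliary space.
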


Exactly as in the FJRW case, the dilaton equation can be used to prove that Theorem \ref{thm:GWMT} is equivalent to the more standard formulation of the mirror theorem:
\[
J\left(\frac{I_1(q)}{I_0(q)}\phi_2,z\right)=\frac{I(q,z)}{I_0(q)},
\]
and the string and dilaton equation, along with the formula for the virtual dimension, show that the mirror theorem determines all genus-zero FJRW invariants of the quintic.

\subsection{Computing genus-zero GW invariants}

Similarly to the FJRW setting, the general construction of the virtual class is far beyond the scope of these lectures. However, the situation greatly simplifies in genus zero. This is due to the fact that  genus-zero stable maps to the quintic embed into $\M_{0,n}(\P^4,d)$, which is a smooth Deligne-Mumford stack, and the push-forward of the virtual class can be described concretely as an Euler class:
\[
i_*[\M_{0,n}(Q,d)]^\vir=[\M_{0,n}(\P^4,d)]\cap e(R^0\pi_*f^*\O_{\P^4}(5)),
\]
where $\pi:\mathcal{C}\rightarrow\M_{0,n}(\P^4,d)$ is the universal curve and $f:\mathcal{C}\rightarrow\P^4$ is the universal map. Therefore, if $\varphi_1,\dots,\varphi_n\in H^*(\P^4)$, the projection formula implies that
\begin{equation}\label{eq:gwinvariants}
\langle\varphi_1\psi^{a_1}\cdots\varphi_n\psi^{a_n}\rangle_{0,n,d}^\mathrm{GW}=\int_{[\M_{0,n}(\P^4,d)]}\left(\mathrm{ev}_1^*(\varphi_1)\psi_1^{a_1}\cdots\mathrm{ev}_n^*(\varphi_n)\psi^{a_n}\right)e(R^0\pi_*f^*\O_{\P^4}(5)),
\end{equation}
where we implicitly restrict $\varphi_i$ to $H^*(Q)$ in the left-hand side. Since $\M_{0,n}(\P^4,d)$ has an action of the torus $T=(\C^*)^5$, the invariants \eqref{eq:gwinvariants} can be computed by localization, and we now provide a brief overview of how this works out.

Each component of the $T$-fixed locus in $\M_{0,n}(\P^4,d)$ can be indexed by a decorated tree. Such a tree $\Gamma$ has vertices $V(\Gamma)$ and edges $E(\Gamma)$. Each vertex $v$ is labeled with an index $j_v\in\{0,\dots,4\}$ and has legs indexed by $I_v\subset\{1,\dots,n\}$, and corresponds to a connected component of $f^{-1}(p_{j_v})$ containing the fixed points $\{q_i:i\in I_v\}$. Each edge $e$ is labeled with a positive integer $d_e$ and corresponds to a connected multiple cover of the corresponding $T$-invariant line in $\P^4$. For example, the fixed locus in Exercise \ref{ex:localization} is indexed by the graph with two vertices labeled $0$ and $1$ connected by an edge labeled $d$. Figure \ref{fig:localization} gives a more general example of a localization graph and its corresponding fixed locus.

\begin{figure}
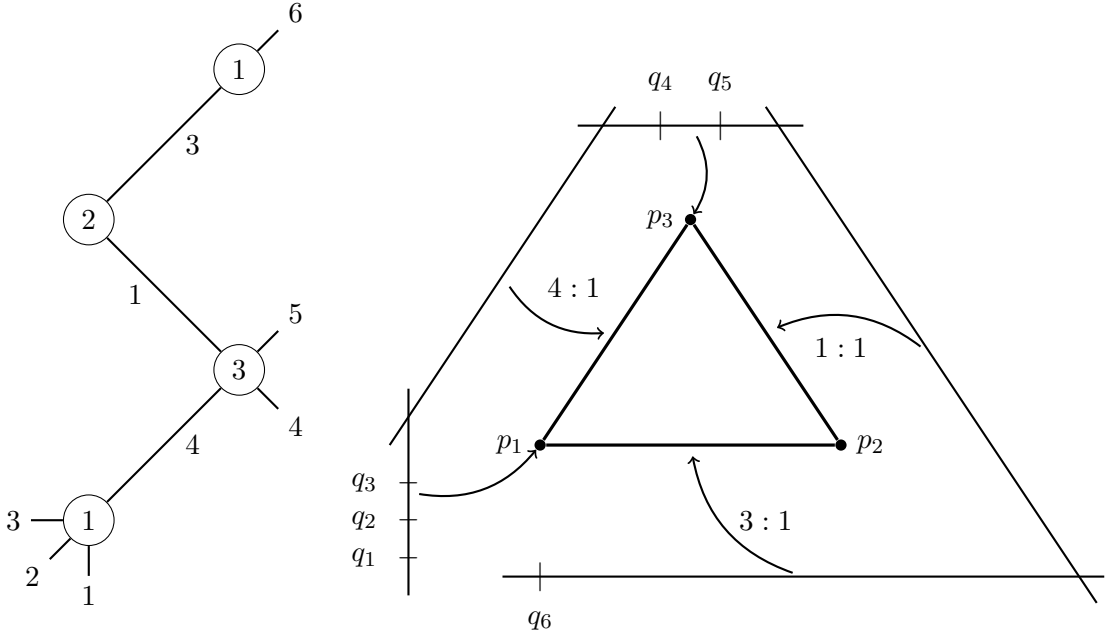

\begin{center}
\tikz{
\node[circle, draw] (a) at (2,0) {1};
\node[circle, draw] (b) at (4,2) {3};
\node[circle, draw] (c) at (2,4) {2};
\node[circle, draw] (d) at (4,6) {1};
\draw[thick] (a)--(b) node [midway, label=right:{$4$}] {};
\draw[thick] (b)--(c) node [midway, label=left:{$1$}] {};
\draw[thick] (c)--(d) node [midway, label=right:{$3$}] {};
\node[] (1) at (2,-1) {1};
\draw[thick] (a)--(1);
\node[] (2) at (1.25,-.75) {2};
\draw[thick] (a)--(2);
\node[] (3) at (1,0) {3};
\draw[thick] (a)--(3);
\node[] (4) at (4.75,1.25) {4};
\draw[thick] (b)--(4);
\node[] (5) at (4.75,2.75) {5};
\draw[thick] (b)--(5);
\node[] (6) at (4.75,6.75) {6};
\draw[thick] (d)--(6);

\node[circle, fill=black, inner sep=1.5pt, label=left:{$p_1$}] (a) at (8,1) {};
\node[circle, fill=black, inner sep=1.5pt, label=right:{$p_2$}] (b) at (12,1) {};
\node[circle, fill=black, inner sep=1.5pt, label=left:{$p_3$}] (c) at (10,4) {};
\draw[very thick] (a)--(b) node (H13) [midway] {};
\draw[very thick] (b)--(c) node (H23) [midway] {};
\draw[very thick] (c)--(a) node (H12) [midway] {};

\draw[thick] (6,1)--(9,5.5) node (A) [midway] {};
\draw[thick] (15.4,-1.1)--(11,5.5) node (B) [midway] {};
\draw[thick] (7.5,-.75)--(15.5,-.75) node (C) [midway] {};
\draw[thick] (8.5,5.25)--(11.5,5.25) node (D) [midway] {};
\draw[thick] (6.25,1.75)--(6.25,-1) node (E) [midway] {};

\node[label=left:{$q_1$}] at (6.25,-.5) {$-$};
\node[label=left:{$q_2$}] at (6.25,0) {$-$};
\node[label=left:{$q_3$}] at (6.25,.5) {$-$};
\node[label=above:{$q_4$}] at (9.6,5.25) {$|$};
\node[label=above:{$q_5$}] at (10.4,5.25) {$|$};
\node[label=below:{$q_6$}] at (8,-.75) {$|$};

\draw[thick] (A) edge[->, bend right] (H12);
\node at (8.45,3.1) {$4:1$};
\draw[thick] (B) edge[->, bend right] (H23);
\node at (12,2.3) {$1:1$};
\draw[thick] (C) edge[->, bend left] (H13);
\node at (11,0) {$3:1$};
\draw[thick] (E) edge[->, bend right] (a);
\draw[thick] (D) edge[->, bend left] (c);
}
\end{center}
\caption{The left-hand image is an example of a localization graph, and the right-hand image is a schematic description of the maps in the corresponding fixed locus. The inner triangle in the right-hand image represents the $\P^2$ inside $\P^4$ spanned by the points $p_1$, $p_2$, and $p_3$. The maps in this fixed locus contract two components of the curve (the one supporting marked points $q_1$, $q_2$, and $q_3$, and the one supporting marked points $q_4$ and $q_5$). The non-contracted components are all mapped onto the corresponding $T$-invariant lines in $\P^4$ as multiple covers of the indicated degree, where any special points are totally ramified over the two corresponding $T$-fixed points in $\P^4$.}\label{fig:localization}
\end{figure}

Let $i_\Gamma:F_\Gamma\hookrightarrow \M_{0,n}(\P^4,d)$ denote the component of the fixed locus indexed by $\Gamma$, and let $G_{n,d}$ denote the set of all decorated graphs that index fixed loci in $\M_{0,n}(\P^4,d)$. Then the localization theorem implies that
\[
\eqref{eq:gwinvariants}=\sum_{\Gamma\in G_{n,d}}\frac{1}{|\mathrm{Aut}(\Gamma)|}\int_{F_\Gamma}\frac{i_\Gamma^*\left(\mathrm{ev}_1^*(\varphi_1)\psi_1^{a_1}\cdots\mathrm{ev}_n^*(\varphi_n)\psi^{a_n}e(R^0\pi_*f^*\O_{\P^4}(5))\right)}{e(N_{F_\Gamma})}.
\]
By normalizing the curves in the fixed locus $F_\Gamma$, the numerator can be computed as a product of terms coming from vertices, edges, and flags (i.e. nodes). The normal bundle has a factor of $R^0\pi_*f^*T\P^4$ from deforming the map $f$ which can also be computed using the normalization sequence, along with a factor from smoothing each node. 

For each edge $e\in E$, let $j_e$ and $j_e'$ denote the labels on the two adjacent vertices. Let $E_v$ denote the set of edges adjacent to $v$, and let $F(\Gamma)$ denote the set of flags $\{(v,e):e\in E_v\}$.  Carefully computing the localization contributions to each fixed locus, one derives the following formula. We refer the reader to \cite[Chapter 27]{MS} for a careful derivation.

\begin{theorem}\label{thm:gwlocalization}
The GW invariant \eqref{eq:gwinvariants} is equal to
\[
\sum_{\Gamma\in G_{n,d}}\frac{1}{|\mathrm{Aut}(\Gamma)|}\prod_{v\in V(\Gamma)}V(j_v,I_v,E_v)\prod_{e\in E(\Gamma)}E(d_e,j_e,j_e')\prod_{(v,e)\in F(\Gamma)}F(j_v)
\]
where
\[
V(j_v,I_v,E_v)=\frac{5\alpha_j}{\prod\limits_{j'\neq j}(\alpha_j-\alpha_{j'})}\int_{\M_{0,\mathrm{val}(v)}}\frac{\prod\limits_{k\in I_v}i_j^*\varphi_k\psi_k^{a_k}}{\prod\limits_{e\in E_v}\left(\frac{\alpha_j-\alpha_{j_e'}}{d_e}-\psi_e \right)},
\]
 \[
E(d,j,j')=\frac{1}{d}\frac{\prod\limits_{k=0}^{5d}\left(5\alpha_j+k\frac{\alpha_{j'}-\alpha_j}{d}\right)}{\prod\limits_{l=0}^4\prod\limits_{k=0 \atop (l,k)\neq (j,0)}^d\left(\alpha_j-\alpha_l+k\frac{\alpha_{j'}-\alpha_j}{d}\right)},
\]
and
\[
F(j)=\left(\frac{5\alpha_j}{\prod\limits_{j'\neq j}(\alpha_j-\alpha_{j'})} \right)^{-1},
\]
where, in the vertex terms, we make special conventions for the unstable moduli spaces:
\[
\int_{\M_{0,2}}\frac{\psi_1^k}{z-\psi_2}=(-z)^k\;\;\;\;\text{ and }\;\;\;\;\int_{\M_{0,1}}\frac{1}{z-\psi_1}=z.
\]
\end{theorem}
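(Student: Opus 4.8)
The plan is to derive the formula by applying the Atiyah--Bott localization theorem to the integral \eqref{eq:gwinvariants} over $\M_{0,n}(\P^4,d)$ with the standard $T=(\C^*)^5$-action, and then identify each piece of the localization contribution with the vertex, edge, and flag factors in the statement. First I would recall the classification of the $T$-fixed loci: a stable map fixed by $T$ must send every non-contracted component isomorphically (as a $d_e$-fold cover) onto one of the $T$-invariant lines $\overline{p_j p_{j'}}$, totally ramified over the two endpoints, and must contract every other component to a fixed point $p_{j_v}$. This data is exactly a decorated tree $\Gamma\in G_{n,d}$, and the associated fixed locus is
\[
F_\Gamma\cong\left(\prod_{v\in V(\Gamma)}\M_{0,\mathrm{val}(v)}\right)\Big/\mathrm{Aut}(\Gamma),
\]
where $\mathrm{val}(v)=|I_v|+|E_v|$ (with the usual conventions that $\M_{0,1}$ and $\M_{0,2}$ are interpreted as points carrying the indicated $\psi$-class conventions). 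This gives the outer sum $\sum_\Gamma \frac{1}{|\mathrm{Aut}(\Gamma)|}$ and reduces everything to computing, for each $\Gamma$, the restriction of the integrand and the Euler class of the normal bundle.

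Next I would compute the numerator $i_\Gamma^*\big(\prod\mathrm{ev}_k^*(\varphi_k)\psi_k^{a_k}\cdot e(R^0\pi_*f^*\O_{\P^4}(5))\big)$. The evaluation classes restrict by $\mathrm{ev}_k^*\varphi_k\mapsto i_{j_v}^*\varphi_k$ where $v$ is the vertex carrying leg $k$, and the $\psi_k$ restrict to the $\psi$-classes on the relevant $\M_{0,\mathrm{val}(v)}$. For the obstruction bundle $R^0\pi_*f^*\O_{\P^4}(5)$ I would use the normalization exact sequence for the universal curve over $F_\Gamma$,
\[
0\to f^*\O(5)\to \bigoplus_{v} f^*\O(5)|_{C_v}\oplus\bigoplus_{e} f^*\O(5)|_{C_e}\to\bigoplus_{\text{nodes}}f^*\O(5)|_{\text{node}}\to 0,
\]
take the long exact sequence in cohomology, and split $e(R^0\pi_*f^*\O(5))$ multiplicatively into: a node (flag) contribution of $5\alpha_{j_v}$ for each flag, an edge contribution $\prod_{k=0}^{5d_e}(5\alpha_{j_e}+k\tfrac{\alpha_{j_e'}-\alpha_{j_e}}{d_e})$ coming from $H^0$ of the restriction to a $d_e$-fold cover of a line (weights of sections of $\O(5d_e)$ on $\P^1$), and a contracted-vertex contribution of $5\alpha_{j_v}$ (the fiber $\O(5)|_{p_{j_v}}$). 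The same normalization bookkeeping, applied instead to $f^*T\P^4$ together with the deformations of the domain (smoothing nodes, moving ramification points), produces the Euler class of the normal bundle $N_{F_\Gamma}$ in the denominator; here the edge deformations contribute the product over $l$ and $k$ in the denominator of $E(d,j,j')$ (weights of $H^0$ and $H^1$ of $f^*T\P^4$ on a cover), the automorphism of the $d_e$-fold cover contributes the $\tfrac{1}{d}$, and the node-smoothing at a flag $(v,e)$ contributes the factor $\tfrac{\alpha_{j_v}-\alpha_{j_e'}}{d_e}-\psi_e$ that appears in the denominator of $V(j_v,I_v,E_v)$.

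The remaining task is purely organizational: regroup the product over all these local pieces so that everything attached to a vertex (the integral over $\M_{0,\mathrm{val}(v)}$, the factor $\tfrac{5\alpha_j}{\prod_{j'\neq j}(\alpha_j-\alpha_{j'})}=e(T_{p_j}\P^4)^{-1}\cdot 5\alpha_j$ from the vertex-localization term for the contracted component, and the node-smoothing denominators over $e\in E_v$) collects into $V(j_v,I_v,E_v)$; everything intrinsic to an edge collects into $E(d_e,j_e,j_e')$; and the flag factor $F(j_v)$ is inserted to correct for the fact that each node's contribution has been counted once at the vertex term of both adjacent ends, i.e. $F(j)$ divides out the duplicated $\tfrac{5\alpha_j}{\prod_{j'\neq j}(\alpha_j-\alpha_{j'})}$ that the edge-vertex interface double-counts. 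I expect the main obstacle to be precisely this bookkeeping at the flags: correctly tracking where the factor $5\alpha_j$ from the obstruction bundle versus the factor $e(T_{p_j}\P^4)^{\pm 1}$ from the normal bundle lands (vertex vs.\ flag), and verifying that the special conventions for $\M_{0,1}$ and $\M_{0,2}$ make the formula consistent when some vertices are unstable (valence $1$ or $2$) — this is where sign conventions and the geometric-series expansions $\tfrac{1}{z-\psi}$ must be handled with care. Once the flag factors are correctly identified, assembling the final product formula is routine, and I would point the reader to \cite[Chapter 27]{MS} for the detailed verification.
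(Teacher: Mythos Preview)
The paper does not actually prove this theorem: immediately before the statement it sketches the approach (normalization sequence splitting the numerator into vertex/edge/flag pieces, normal bundle from $R^0\pi_*f^*T\P^4$ plus node-smoothing) and then refers the reader to \cite[Chapter 27]{MS} for a careful derivation. Your proposal is precisely a fleshed-out version of that same sketch --- Atiyah--Bott localization, classification of fixed loci by decorated trees, normalization-sequence bookkeeping for both $f^*\O(5)$ and $f^*T\P^4$, and regrouping into $V$, $E$, $F$ --- so it is correct and matches the paper's intended approach.

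One small caution on your interpretation of $F(j)$: rather than ``double-counting at adjacent vertex terms,'' the flag factor arises directly from the node term in the normalization sequence, namely the ratio $e(T_{p_j}\P^4)/(5\alpha_j)$ coming from $f^*T\P^4|_{\text{node}}$ in the denominator and $f^*\O(5)|_{\text{node}}$ in the numerator. This yields the same $F(j)$ but with cleaner bookkeeping than a cancellation argument.
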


If you have never computed $\psi$-class integrals on $\M_{0,n}$, the following exercise is a fun application of the string equation.

\begin{exercise}\label{ex:psi}
Prove that
\[
\int_{\M_{0,n}}\psi_1^{a_1}\cdots\psi_n^{a_n}={n-3 \choose a_1,\dots,a_n}.
\]
\end{exercise}

Along with Theorem \ref{thm:gwlocalization}, Exercise \ref{ex:psi} provides an algorithm to compute any genus-zero GW invariant of the quintic threefold. As a first example, the interested reader can recover the following classical result.

\begin{exercise}\label{ex:2875}
A smooth quintic threefold contains $2875$ lines.
\end{exercise}

All of this seems rather promising; however, the complexity of the graph sums grows at an enormous rate and computations very quickly become inconceivable, even with the most powerful computers at our disposal. If you are not convinced, take a stab at the following exercise.

\begin{exercise}\label{ex:graphs}
Enumerate the $T$-fixed loci in $\M_{0,0}(\P^4,d)$ for $d=2,3,4$.
\end{exercise}

Therefore, in order to prove an explicit result such as Theorem \ref{thm:GWMT}, we need to find and exploit  recursive combinatorial structures in the localization graphs.

\section{Proof of the Gromov-Witten mirror theorem}

In this lecture, we prove the genus-zero GW mirror theorem for the quintic threefold by studying recursions that arise from the combinatorial structure of the localization graphs that were described in the previous lecture.

\begin{proof}[Proof of Theorem \ref{thm:GWMT}]

We begin with the left-hand side of Theorem \ref{thm:GWMT}. Pushing forward to $H^*(\P^4)$, we obtain
\[
i_*J(\tau,z)=5Hz+5H\tau(-z)+\sum_{n,d \atop 0\leq i\leq 4}\frac{q^d}{n!}\left\langle \tau(\psi)^n\frac{H^i}{z-\psi}\right\rangle_{0,n+1,d}^\mathrm{GW}H^{4-i},
\]
where we can safely include the summand $i=4$ because the corresponding invariants vanish. Lifting to equivariant cohomology, then restricting to the fixed point basis, we can write
\[
J_j(\tau,z):=\frac{i_j^*i_*J(\tau,z)}{e(N_{p_j/\P^4})}=\frac{5\alpha_j}{\prod\limits_{j'\neq j}(\alpha_j-\alpha_{j'})}\left(z+\tau_j(-z)\right)+\sum_{n,d}\frac{q^d}{n!}\left\langle \tau(\psi)^n\frac{\rho_j}{z-\psi}\right\rangle_{0,n+1,d}^\mathrm{GW},
\]
where $\tau_j(z)$ is obtained from $\tau(z)$ by replacing $H$ with $\alpha_j$ and $\rho_j=\frac{\prod\limits_{j'\neq j}(H-\alpha_{j'})}{\prod\limits_{j'\neq j}(\alpha_j-\alpha_{j'})}$ denotes the equivariant cohomology class that restricts to the unit at the $j$th fixed point. The correlators in $J_j$ can be computed by the localization formulas of the previous lecture. Each contributing graph has a distinguished vertex $v_0$ that supports the $(n+1)$th marked point. We break the graphs up into two types (see Figure \ref{fig:types}):
\begin{enumerate}
\item Graphs where $\mathrm{val}(v_0)=2$; and
\item Graphs where $\mathrm{val}(v_0)>2$.
\end{enumerate}

\begin{figure}
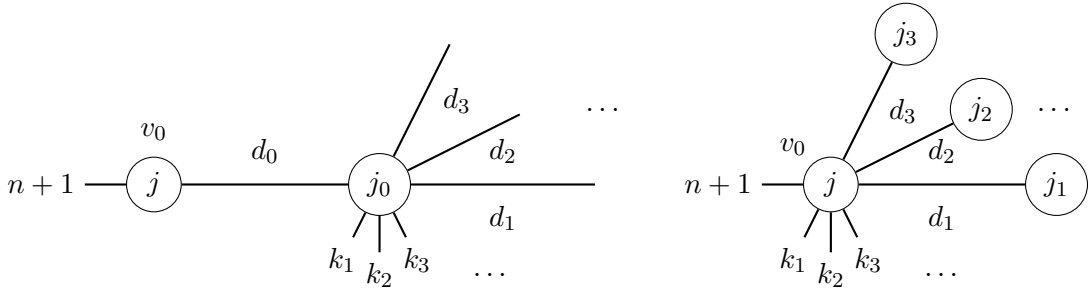

\begin{center}
\tikz{
\node[circle, draw] (a) at (0,0) {$j$};
\node[circle, draw] (b) at (3,0) {$j_0$};
\draw[thick] (a)--(b) node [midway, label=above:{$d_0$}] {};
\node[] (c) at (-1.5,0) {$n+1$};
\draw[thick] (a)--(c);
\node[] at (0,.7) {$v_0$};
\node[] (d) at (6,0) {};
\node[] (e) at (5,1) {};
\node[] (f) at (4,2) {};
\node[] at (6,1) {$\dots$};
\draw[thick] (b)--(d) node [midway, label=below:{$d_1$}] {} node [midway, label=above:{$d_2$}] {};
\draw[thick] (b)--(e);
\draw[thick] (b)--(f) node [midway,label=right:{$d_3$}] {};
\node[] (g) at (2.5,-1) {$k_1$};
\node[] (h) at (3,-1.2) {$k_2$};
\node[] (i) at (3.5,-1) {$k_3$};
\draw[thick] (b)--(g);
\draw[thick] (b)--(h);
\draw[thick] (b)--(i);
\node[] at (4.5,-1.2) {$\dots$};

\node[circle, draw] (b) at (9,0) {$j$};
\node[] at (8.5,.5) {$v_0$};
\node[circle, draw] (d) at (12,0) {$j_1$};
\node[circle, draw] (e) at (11,1) {$j_2$};
\node[circle, draw] (f) at (10,2) {$j_3$};
\node[] at (12,1) {$\dots$};
\node[] (c) at (7.5,0) {$n+1$};
\draw[thick] (b)--(c);
\draw[thick] (b)--(d) node [midway, label=below:{$d_1$}] {} node [midway, label=above:{$d_2$}] {};
\draw[thick] (b)--(e);
\draw[thick] (b)--(f) node [midway,label=right:{$d_3$}] {};
\node[] (g) at (8.5,-1) {$k_1$};
\node[] (h) at (9,-1.2) {$k_2$};
\node[] (i) at (9.5,-1) {$k_3$};
\draw[thick] (b)--(g);
\draw[thick] (b)--(h);
\draw[thick] (b)--(i);
\node[] at (10.5,-1.2) {$\dots$};
}
\end{center}
\caption{Graphs of type (1) (left) and type (2) (right).}\label{fig:types}
\end{figure}

Notice that every graph of type (2) can be obtained in a unique way by taking some number of graphs of type (1), identifying their distinguished vertices as a single vertex, and possibly adding some additional points at the new distinguished vertex. Therefore, away from the vertex $v_0$, contributions from graphs of type (2) are equal to sums of contributions of graphs of type (1). We now investigate what happens at the vertex $v_0$.

For graphs of type (1), the contribution of the vertex $v_0$ is
\begin{equation}\label{eq:type1vert}
\frac{5\alpha_j}{\prod\limits_{j'\neq j}(\alpha_j-\alpha_{j'})}\frac{1}{z+\frac{\alpha_j-\alpha_{j_0}}{d_0}}.
\end{equation}
This expression can be expanded as a power series in $z$. For graphs of type (2), the contribution of the vertex $v_0$ is
\begin{equation}\label{eq:type2vert}
\frac{5\alpha_j}{\prod\limits_{j'\neq j}(\alpha_j-\alpha_{j'})}\int_{\M_{0,\mathrm{val}(v_0)}}\frac{\prod\limits_{k\in I_v}\tau_j(\psi_k)}{\prod\limits_{e\in E_v}\left(\frac{\alpha_j-\alpha_{j_e'}}{d_e}-\psi_e \right)}\frac{1}{z-\psi_{n+1}},
\end{equation}
where the denominator is expanded as a power series in the $\psi$-classes. Therefore, in order to compute the contribution from a graph of type (2), one can compute the contributions from the corresponding graphs of type (1), multiply them each by $F(j)$ to cancel the pre-factor in \eqref{eq:type1vert}, replace each $z$ with the appropriate $-\psi_e$, and then compute the integral over $\M_{0,\mathrm{val}(v_0)}$ as in \eqref{eq:type2vert}.

To summarize the above discussion more succinctly, define the power series $T_j(z)$ by the formula
\[
T_j(-z):=F(j)\cdot\left(\text{contributions of all graphs of type (1)}\right).
\]
Then
\begin{equation}\label{eq:Jj}
J_j(\tau,z)=\frac{5\alpha_j}{\prod\limits_{j'\neq j}(\alpha_j-\alpha_{j'})}\left(z+\tau_j(-z)+T_j(-z)+\sum_{n\geq 2}\frac{1}{n!}\int_{\M_{0,n+1}} \frac{(\tau_j(\psi)+T_j(\psi))^n}{z-\psi_{n+1}}\right),
\end{equation}
where (by \eqref{eq:type1vert}) we can write
\begin{equation}\label{eq:poles}
T_j(-z)=\sum_{j'\neq j \atop d>0}\frac{T_j^{d,j'}}{z-\frac{\alpha_{j'}-\alpha_{j}}{d}}.
\end{equation}
In addition, by removing the vertex $v_0$ and the edge $e_0$ from graphs of type (1), we see that 
\begin{equation}\label{eq:Jjrecursion}
T_j^{d,j'}=q^dF(j)E(d,j,j')F(j')J_{j'}\left(\tau,z=\frac{\alpha_j-\alpha_{j'}}{d}\right).
\end{equation}
Equations \eqref{eq:Jj}, \eqref{eq:poles}, and \eqref{eq:Jjrecursion} determine $J(\tau,z)$ from $\tau(z)$. To see why, notice that the $q^{\leq d}$ terms in \eqref{eq:Jj} are determined by the $q^{\leq d}$ terms in $T_j(z)$ (and $\tau_j(z)$), and the edge-removal recursion \eqref{eq:Jjrecursion} computes $q^{\leq d}$ terms of $T_j(z)$ as $q^{<d}$ terms of $J_j'(\tau,z)$.

Thus, in order to prove Theorem \ref{thm:GWMT},  we must show that
\begin{equation}\label{eq:Ij}
I_j(q,z):=\frac{i_j^*i_*J(\tau,z)}{e(N_{p_j/\P^4})}=z\sum_{d\geq 0}q^d\frac{\prod\limits_{k=0}^{5d}(5\alpha_j+kz)}{\prod\limits_{l=0}^4\prod\limits_{k=0 \atop (l,k)\neq (j,0)}^d(\alpha_j-\alpha_l+kz)}
\end{equation}
can also be written in the form \eqref{eq:Jj} for some $\widetilde T_j^{d,j'}$ that satisfy the edge-removal recursion \eqref{eq:Jjrecursion}. Since \eqref{eq:Ij} has the same simple poles as \eqref{eq:poles}, along with higher-order poles at $z=0$, then the partial fractions decomposition implies that
\[
I_j(q,z)=\frac{5\alpha_j}{\prod\limits_{j'\neq j}(\alpha_j-\alpha_{j'})}\left(z+\tau_j(-z)+\widetilde{T}_j(-z)+\O(z^{-1})\right)
\]
where
\[
\widetilde{T}_j(-z)=\sum_{j'\neq j \atop d>0}\frac{\widetilde{T}_j^{d,j'}}{z-\frac{\alpha_{j'}-\alpha_{j}}{d}}.
\]
Moreover, the next exercise, which is a direct computation, verifies that the $q$-series $\widetilde{T}_j^{d,j'}$ satisfy \eqref{eq:Jjrecursion}.

\begin{exercise}
Prove that
\[
{\mathrm{Res}}_{z=\frac{\alpha_{j'}-\alpha_j}{d}}I_j(q,z)=q^dE(d,j,j')F(j')I_{j'}\left(q,z=\frac{\alpha_{j'}-\alpha_j}{d}\right).
\]
\end{exercise}

\noindent Therefore, the only thing that is left to verify is that $I_j(q,z)$ has the form \eqref{eq:Jj}; in other words, we need to check that
\begin{equation}\label{eq:lastcheck}
I_j(q,z)=\frac{5\alpha_j}{\prod\limits_{j'\neq j}(\alpha_j-\alpha_{j'})}\left(z+\tau_j(-z)+\widetilde{T}_j(-z)+\sum_{n\geq 2}\frac{1}{n!}\int_{\M_{0,n+1}} \frac{(\tau_j(\psi)+\widetilde{T}_j(\psi))^n}{z-\psi_{n+1}}\right).
\end{equation}
This last check follows the exact same arguments as the proof of Theorem \ref{FJRWMT}. For the reader's convenience, we outline the main steps and leave the details as exercises.

Define moduli spaces
\[
\G\M_{0,n,d}=\left\{ (C;q_1,\dots,q_n;D;g)\right\}/\sim
\]
where
\begin{itemize}
\item $(C;q_1,\dots,q_n)$ is a pre-stable, $n$-pointed, genus-zero curve;
\item $D$ is an effective degree-$d$ divisor on $C$, supported away from the marks and nodes,
\item $g:C\rightarrow \P^1$ is a degree-$1$ map; and
\item stability: $\omega_{C,\log}\otimes\O(\epsilon D)\otimes g^*\O_{\P^1}(3)$ is ample for all $\epsilon>0$.
\end{itemize}

Define a $T$-equivariant virtual class:
\[
[\G\M_{0,n,d}]_j^{\vir}:=\frac{e(R^0\pi_*(\O(5\mathcal{D})\otimes\O_{5\alpha_j}))}{e(R^0\pi_*(\oplus_{j'\neq j}\O(\mathcal{D})\otimes\O_{\alpha_j-\alpha_{j'}}))}.
\]
The moduli spaces $\G\M_{0,n,d}$ have a $\C^*$-action, and a distinguished fixed locus $i_{\Gamma_0}:\Gamma_0\hookrightarrow\G\M_{0,n,d}$. Define
\[
\J_j(t,q,z):=-z^2\sum_{n,d\geq 0}\frac{q^d}{n!}\frac{i_{\Gamma_0}^*[\G\M_{0,n+1,d}]_j^{\vir}\cap t(\psi)^n}{e(N_{\Gamma_0})}.
\]
where $t(z)=\sum_{k\geq 0} t_kz^k$. The following two exercises are straightforward modifications of the arguments of Lecture \ref{lecture:proofFJRW}, and they finish the proof of Theorem \ref{thm:GWMT}.

\begin{exercise}
Define 
\[
\widetilde{\tau}_j(z):=I_j(q,-z)+z=\tau_j(z)+\widetilde{T}_j(z).
\]
Prove that 
\[
\J_j(\widetilde{\tau}_j,0,z)=\frac{5\alpha_j}{\prod\limits_{j'\neq j}(\alpha_j-\alpha_{j'})}\left(z+\tau_j(-z)+\widetilde{T}_j(-z)+\sum_{n\geq 2}\frac{1}{n!}\int_{\M_{0,n+1}} \frac{(\tau_j(\psi)+\widetilde{T}_j(\psi))^n}{z-\psi_{n+1}}\right).
\] 
and 
\[
\J_j(0,q,z)=I_j(q,z).
\]
\end{exercise}

\begin{exercise}
Prove that
\[
\J_j(t_0+\widetilde{\tau}_j,0,z)=\J_j(t_0,q,z).
\]
\end{exercise}
\end{proof}

\bibliographystyle{abbrv}
\bibliography{biblio}

\end{document}